\numberwithin{equation}{section}
\newtheorem{lem}{Lemma}[section]
\newtheorem{thm}{Theorem}[section]
\newtheorem{example}{Example}[section]
\newtheorem{proposition}[thm]{Proposition}
\theoremstyle{remark}
\newtheorem{rmk}{Remark}[section]
\newtheorem{definition}[thm]{Definition}
\newtheorem{assum}{Assumption}
\newcommand{\nn}{\nonumber}
\newcommand{\R}{{\mathbb R}}
\renewcommand{\tilde}{\widetilde}
\renewcommand{\bar}{\overline}
\newcommand{\mc}[1]{\mathcal{#1}}
\newcommand{\EE}{\mathbb{E}}
\newcommand{\RR}{\mathbb{R}}
\newcommand{\PP}{\mathbb{P}}
\newcommand{\OV}{\overline{V}}
\newcommand{\OX}{\overline{X}}
\newcommand{\OY}{\overline{Y}}
\newcommand{\bxy}{\bar y(x)}
\newcommand{\bxyn}{\bar y(\OX_0)}
\newcommand{\bxynt}{\bar y(\OX_t)}
\newcommand{\TE}{\mathcal{E}}
\newcommand{\CX}{\mathcal{X}}
\newcommand{\CY}{\mathcal{Y}}
\DeclareMathOperator\diag{diag}
\author{Giacomo Borghi}
\address{Department of Mathematics, Heriot-Watt University, Edinburgh, United Kingdom}
\email{g.borghi@hw.ac.uk}
\author{Hui Huang}
\address{Department of Mathematics and Scientific Computing, Karl-Franzens-Universit\"{a}t Graz, Universit\"{a}tspl. 3, 8010 Graz, Austria}
\email{hui.huang@uni-graz.at}
\author{Jinniao Qiu}
\address{Department of Mathematics and Statistics, University of Calgary, 2500 University Dr NW, Calgary, AB T2N 1N4, Canada}
\email{jinniao.qiu@ucalgary.ca}
\begin{document}
\title[CBO for MinMax problems]{A Particle Consensus Approach to solving  nonconvex-nonconcave min-max problems}
\maketitle
\begin{abstract}
We propose a zero-order optimization method for sequential min-max problems based on two populations of interacting particles. The systems are coupled so that one population aims to solve the inner maximization problem, while the other aims to solve the outer minimization problem. The dynamics are characterized by a consensus-type interaction with additional stochasticity to promote exploration of the objective landscape.
Without relying on convexity or concavity assumptions, we establish theoretical convergence guarantees of the algorithm via a suitable mean-field approximation of the particle systems. Numerical experiments illustrate the validity of the proposed approach. In particular, the algorithm is able to identify a global min-max solution, in contrast to gradient-based methods, which typically converge to possibly suboptimal stationary points.
\end{abstract}
{\noindent\small {\bf Keywords:} Nonconvex-nonconcave, min-max problems, consensus-based optimization, derivative-free optimization, sequential games} \\
{\noindent\small {\bf AMS subject classifications:} 65C35; 65K05; 90C56; 35Q90; 35Q83}
\section{Introduction}

\subsection{Motivation}

The min-max optimization problem, also known as the saddle point problem, represents a compelling yet complex category of challenges. The objective here is to determine a value for the argument that minimizes the objective function's value even in the worst-case scenario within a particular class of objective functions. This approach was initially introduced in the realm of multi-player zero-sum games in game theory \cite{von2007theory}. Such problems are widespread in various domains, including mathematics, biology, social sciences, and especially in economics \cite{myerson1997game}. In recent years, min-max optimization has also seen broad applications in signal processing, such as distributed processing \cite{chang2020distributed}, robust transceiver design, fair resource allocation \cite{liu2013max}, and communication strategies in the presence of jammers \cite{gohary2009generalized}. Moreover, this optimization technique has gained prominence in modern machine learning contexts, including generative adversarial networks (GANs) \cite{goodfellow2020generative}, fair machine learning \cite{madras2018learning}, adversarial training \cite{madry2018towards}, and multi-agent reinforcement learning \cite{omidshafiei2017deep}. For an overview of further applications and challenges, see \cite{razaviyayn2020nonconvex}.

To initiate our discussion, consider the scenario involving two agents. The first agent aims to minimize the objective function $\TE~: \mc{X}\times \mc{Y}\to \RR$, whereas the second agent's goal is to maximize it, namely
\begin{equation}\label{minmaxprobl}
	\min_{x\in  \mc{X}\subseteq \R^{d_1}}\max_{y\in \mc{Y} \subseteq\R^{d_2}}\TE(x,y).
\end{equation}
In this context, $x$ and $y$ denote the optimization variables, with $\mc{X}$ and $\mc{Y}$ representing feasible sets. The objective function $\TE$ may exhibit non-convexity with respect to $x$ and non-concavity with respect to $y$. Specifically, within the framework of GANs, $x$ symbolizes the parameters of a generator, whereas $y$ corresponds to those of a discriminator. Here, $\TE(\cdot, \cdot)$ serves as the cost function for the generator,  that is equal to the negative of the discriminator's cost function.  In adversarial training, $x$ is the weight of the neural network that one aims to train while $y$ models possible adversarial attacks, and $\TE(\cdot, \cdot)$ the loss function.

When min-max problem is defined in a symmetrical way, namely
\begin{equation}\label{symcon}
	\min_{x\in  \mc{X}}\max_{y\in \mc{Y} }\TE(x,y)=\max_{y\in \mc{Y} }\min_{x\in  \mc{X}}\TE(x,y)\,,
\end{equation}
 a well-known notion of optimality originating from game theory is the one of Nash equilibria (also referred to as saddle points)~\cite{nash1950equilibrium}, where neither of the players has anything to gain by changing only his own strategy.
 This concept is formalized within the following definition \cite{zhang2022optimality}:
 \begin{definition} \label{defnash} 
 	A point $(x^*,y^*)\in\CX\times\CY$ is called Nash equilibrium or saddle point of a function $\TE$ if it holds
 	\begin{equation*}
 		\TE(x^*,y)\leq \TE(x^*,y^*)\leq \TE(x,y^*) \quad\text{for all } (x,y)\in \CX\times\CY\,.
 	\end{equation*}
 In other words, we have simultaneously:
 \begin{equation*}
 	x^*\in \arg\min_{x\in \CX}\TE(x,y^*),\quad  	y^*\in \arg\max_{y\in \CY}\TE(x^*,y)\,.
 \end{equation*}
 \end{definition}
 According to \cite{jensen2001robust}, the symmetrical property \eqref{symcon} is a sufficient and necessary condition for the existence of Nash equilibrium $(x^*,y^*)$ in closed convex domains. This is corresponding to simultaneous games, where each player chooses its action without the knowledge of the action chosen by the other player, so both players act simultaneously. While in sequential games, there is an intrinsic order ($\min_x\max_y$ and $\max_y\min_x$) that players take their actions. Actually, GANs and adversarial training are in fact sequential games in their standard formulations, where Nash equilibria may not exist \cites{farnia2020gans}.  For example, in the formulation of GANs, one first find the optimal parameters of the discriminator $y$ based on the parameters of the generator $x$, and then optimize over $x$. 
%
In this case, we first introduce the upper  envelope functions:
\begin{equation}
	\overline \TE(x)= \max_{y\in \CY}\TE(x,y)\,,
\end{equation}
which is allowed to take value $+\infty$.
Then it  leads to the following solution concept \cite{zhang2022optimality}:
\begin{definition}\label{defsolminmax}
	Point $(x^*,y^*)\in\CX\times \CY$ is a global min-max solution to  \eqref{minmaxprobl} if
	\begin{equation}
	x^*\in \arg\min_{x\in \CX}\overline\TE(x)\quad \mbox{ and }\quad y^*=y^*(x^*)\in\arg\max_{y\in \CY}\TE(x^*,y)\,,
	\end{equation}
in other words,  for all $(x,y)\in \CX\times \CY$ it holds
	\begin{equation}
	\TE(x^*,y)\leq \TE(x^*,y^*)=\overline \TE(x^*)\leq \overline \TE(x)\,.
	\end{equation}
\end{definition}
\noindent Similarly one can use the lower envelope function $\underline \TE(y):=\min_{x\in\CX}\TE(x,y)$ to define a global max-min solution to a max-min problem $\max_{y\in \mc{Y} }\min_{x\in  \mc{X}}\TE(x,y)$.

\begin{figure}
\includegraphics[width = 1\linewidth]{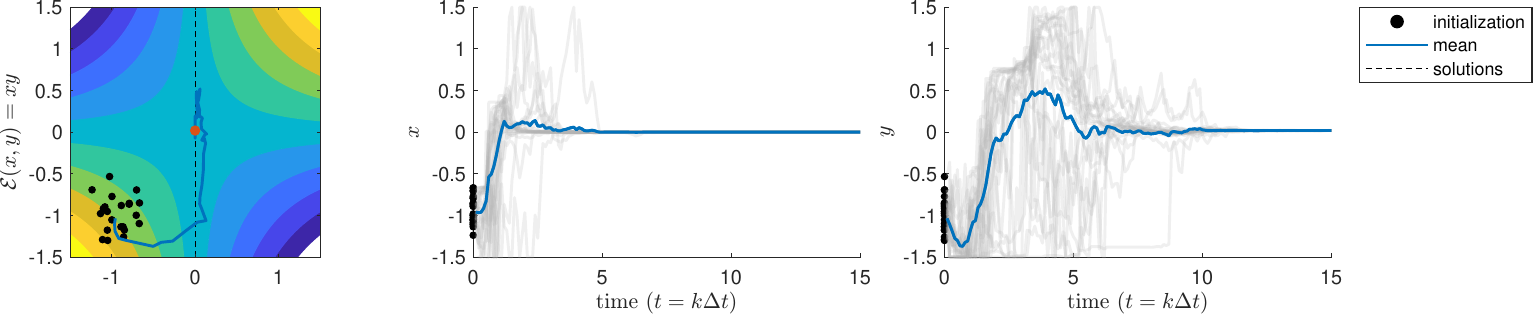}
\caption{Illustration of a run of the proposed consensus-based algorithm to solve Example \ref{ex_bilinear}). We employ $N=25$ particles, both in $\mathcal{X}$ and $\mathcal{Y}$, which all reach consensus around the global min-max solution global solution $(0,0)$. Refer to \cite{hsieh2021limits} for information on how the mean of particles does exhibit the typical cycling behavior of gradient-based  methods. 
}
\label{fig:illustrative}
\end{figure}

To better understand the definitions, we introduce the following examples. The first example considers a simple bilinear (hence convex-concave) objective function:
\begin{example} \label{ex_bilinear}
	Consider the bilinear objective function 
	\begin{equation*}
		\TE(x,y):=xy,\quad  (x,y)\in \R\times \R\,.
	\end{equation*} 
	It is easy to verify that global min-max points are $\{0\}\times \R$ while global max-min points are $\R\times \{0\}$. Taking the intersection we obtain the unique global saddle point $(0,0)$.
\end{example}
\noindent In the next example, the objective function $\TE$ possesses both global min-max and max-min points, yet it lacks a saddle point.
\begin{example} \label{ex_bivariate}
	Consider the bivariate function
	\begin{equation*}
		\TE(x,y)=x^4/4-x^2/2+xy,\quad (x,y)\in \R\times \R\,.
	\end{equation*}
	One can check that global min-max points occur at $\{0\}\times \R$ with a value $0$, whereas the global max-min points are located at $(\pm1,0)$ with values of $-1/4$. Nonetheless, there are no saddle points.
	\end{example}

Although the concept of global min-max is clearly defined in Definition \ref{defsolminmax}, it encounters significant challenges when applied to nonconvex-nonconcave scenarios. The primary issue is the lack of an efficient algorithm for finding a global minimizer $x^*$ of a non-convex function $\overline \TE(x)$. This challenge can be partially addressed by aiming for a local minimizer or a stationary point. Moreover, once $x^*$ is determined, finding a global maximizer $y^*$ for the non-concave function $\TE(x^*,y)$ is NP-hard. While it may be tempting to settle for a local solution, doing so compromises the initial notion of optimality for $x^*$. Therefore, there is a pressing need for a new method to effectively address these complexities arising from nonconvex-nonconcave problems. In contrast with \cite{huang2022consensus} where a consensus-based method is introduced for finding global saddle points satisfying Definition \ref{defnash}, this paper aims to propose a new method for finding global min-max solutions that only meet the criteria of Definition \ref{defsolminmax}. This method is especially relevant where the condition of symmetry, as in \eqref{symcon}, may not hold, as illustrated by Example \ref{ex_bivariate}.

\subsection{Literature review}

Due to the aforementioned applications in Machine Learning, there is a renewed interest in the development of solvers for nonconvex-nonconcave minmax problems. The Gradient Descent Ascent (GDA) algorithm is a first order algorithms which couples a descent dynamics in the $x$ direction, with an ascent one in $y$ one. This can be done simultaneously (single-loop algorithm), or by alternating the two steps (double-loops). While being suitable for convex-concave problems \cite{cheru2017saddle}, GDA fails to convergence even in simple problems like example \ref{ex_bilinear}, see e.g. \cite{chau2019negative}. More generally, in \cites{hsieh2021limits,letcher2021imposs}, the authors investigate the convergence properties of gradient-based methods in the nonconvex-nonconcave settings proving existence of cycles or attractors that are not global min-max solutions in the sense of Definition \ref{defsolminmax}. Many improved variants of GDA have been proposed and analyzed to mitigate this effects see, for example, \cites{chau2019negative,zheng2023universal,wang2020ridge} to name a few. We note that gradient-based methods aim to converge to stationary solutions which are not necessary global minmax solutions, unless the objective function satisfies additional assumptions such as the two-sided Polyak--Łojasiewicz inequality condition \cite{yang2020global}. For additional details on the relation between (local) minimax solutions, stationary points, and Nash equilibria, we refer to \cite{jin2020what}.

Alternative to GDA-like dynamics, gradient-free (zero-order) methods aim to solve \eqref{minmaxprobl} by only relying on the function evaluations. This is, for instance, the scenario where the attacker, in the GAN framework, has only access to the model's output.
This can be done, for example, via a randomized gradient estimation \cites{liu2020zero}, model approximations of the objective function \cites{meni2020derivativefree}, or evolutonary algorithms \cites{cramer2009ea,herrmann1999ga,laskari2002pso,chen2015pso}. The last class comprises metaheuristic startegies \cites{Blum:2003:MCO:937503.937505,Gendreau:2010:HM:1941310} where a set of particles explore the search space via deterministic and random interactions. Authors in \cites{laskari2002pso,chen2015pso}, in particular, adapt the particle-swarm evolution \cites{kennedy1995particle}, while in \cites{cramer2009ea,herrmann1999ga} the genetic algorithm paradigm \cites{Holland:1992:ANA:531075} is used. We note that, despite their popularity in the field of non-smooth optimization, such evolutionary methods typically lack convergence guarantees.

Our approach will be inspired by the framework of consensus-based optimization (CBO), initially introduced by \cites{carrillo2018analytical,PTTM}. CBO belongs to the family of global optimization methodologies, which leverages systems of interacting particles to achieve consensus around global minimizers of the cost functions. As part of the broader class of metaheuristics, 
CBO orchestrates interactions between local improvement procedures and global strategies, utilizing both deterministic and stochastic processes. This interplay ultimately results in an efficient and robust procedure for exploring the solution space of the cost functions.
A distinctive advantage of the CBO approach is its gradient-free nature, making it particularly appealing for optimizing cost functions that are either non-smooth or where the computation of derivatives is overly burdensome.

In response to a wide range of applications, the foundational CBO model has been extended and refined to accommodate diverse scenarios. These enhancements include the incorporation of memory aspects or gradient information \cites{riedl2022leveraging,totzeck2020consensus,cipriani2022zero}, and the integration of momentum mechanisms \cite{chen2022consensus}. Moreover, CBO has been adapted to address challenges in global optimization on compact manifolds \cites{fornasier2020consensus,ha2022stochastic}, to tackle general constraints \cites{borghi2023constrained,carrillo2023consensus}, to optimize functions with multiple minimizers \cite{bungert2022polarized,fornasier2024pde}, to solve multi-objective problems \cites{borghi2022consensus,borghi2022adaptive}, and to sample from specific distributions \cite{carrillo2022consensus}. Further applications of CBO have spanned high-dimensional machine learning issues \cites{carrillo2021consensus,fornasier2021consensus}, asset allocation tasks \cite{bae2022constrained}, clustered federated learning \cite{carrillo2023fedcbo}, and more recently, non-convex multiplayer games \cite{chenchene2023consensus}. Finally, during the preparation of this work, we observed that the authors in \cite{herty2024multiscale}—before their work was posted online—independently developed a CBO dynamic similar to ours for bi-level optimization problems. However, their focus was primarily on the numerical aspects, employing a multi-time scaling approach without providing proof of global convergence.

\subsection{Contribution}

In this paper, we propose a zero-order, consensus-based method aimed at identifying global min-max solutions that comply with Definition \ref{defsolminmax}. The algorithm employs two sets of interacting particles, $N$ in the $x$ direction, and $N$ in the $y$ direction, which are coupled together to specifically solve the sequential min-max problem \eqref{minmaxprobl}.
The particle system is described via a system of $2N$-dimensional SDEs. 

To study the large-time behavior of the dynamics, we propose a mean-field approximation that reduces the system to only two mean-field SDEs of Mckean--Vlasov type. Inspired by \cite{fornasier2021consensus1} we establish convergence towards global min-max problems under specific assumptions on the objective function. We assume, in particular, the uniqueness of the solution and the growth conditions around it for the objective function. Our analysis does not require any convexity/concavity assumption, nor even differentiability of $\TE$.

We test the proposed algorithm on benchmark min-max problems which are nonconvex-nonconcave, namely the "Bilinearly-coupled" \cite{grimmer2023prox}, the 
"Forsaken" \cite{hsieh2021limits}, and the "Sixth-order polynomial" problem \cite{wang2020ridge}. While state-of-the-art gradient-based methods converges to sub-optimal stationary points \cite{zheng2023universal}, the proposed CBO method successfully computes optimal solutions in the sense of Definition \ref{defsolminmax}, see Figure \ref{fig:illustrative_2}. Some numerical tests are also conducted to investigate how different parameters of the algorithms affect the convergence properties.

The rest of the paper is organized as follows. In Section \ref{sec:dynamics} we illustrate the particle system and its mean-field approximation. Section \ref{sec:convergence} is devoted to the theoretical analysis of the mean-field dynamics and its convergence properties. 
Results on benchmark problems are presented in Section \ref{sec:numerics}, while some technical aspects of the proofs are presented in the Appendix.

\begin{figure}
\centering
\includegraphics[trim = 0cm 0cm 2.8cm 0cm,clip, width = 0.17\linewidth]{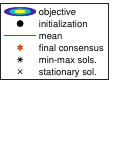}
\subfloat[\label{fig:ill_3}]{\includegraphics[trim = 1.9cm 0cm 0cm 0cm,clip,height = 0.25\linewidth]{fig_illustrative_bis_3}}\hfill
\subfloat[\label{fig:ill_4}]{\includegraphics[trim = 1.9cm 0cm 0cm 0cm,clip,height = 0.25\linewidth]{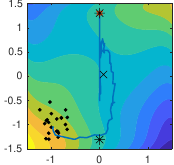}}\hfill
\subfloat[\label{fig:ill_10}]{\includegraphics[trim = 1.9cm 0cm 0cm 0cm,clip,height = 0.25\linewidth]{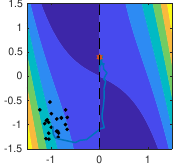}}
\caption{Illustration of a run of the proposed consensus-based algorithm for the "Forsaken" (A), "Sixth-order polynomial" (B), and Example \ref{ex_bivariate} (C) problems. Unlike gradient-based methods (see, for instance, \cite[Figure 4]{zheng2023universal}) the particles create consensus around global min-max solutions rather than stationary points. The dotted line in (C) indicated the set of min-max solutions. Problems definition for (A), (B) are given in Section \ref{sec:numerics}}
\label{fig:illustrative_2}
\end{figure}

\section{The particle dynamics}
\label{sec:dynamics}

For simplicity, we take $\mathcal X=\mathcal Y=\mathbb R^d$ as the extensions to cases with $\mathcal X$ and $\mathcal Y$ having different dimensions are straightforward. Specifically, our CBO dynamic for min-max problem is a collection of $2N$ $ \R^{d}\times \R^{d}$-valued particles $\{(X_t^{i},Y_t^{i})\}_{i\in [N]}$ satisfying the following Stochastic Differential Equations (SDEs)
\begin{subequations}\label{particle}
	\begin{numcases}{}
		dX_t^i=-\lambda_1(X_t^i-X_{\alpha,\beta}(\rho_t^{N,X}))dt+\sigma_1 D(X_t^i-	X_{\alpha,\beta}(\rho_t^{N,X}))dB_t^{X,i}\,, \quad i\in[N]:=\{1,\dots,N\}\,,\label{eqX}\\
		dY_t^i=-\lambda_2(Y_t^i-Y_\beta(\rho_t^{N,Y},X_t^i))dt+\sigma_2 D(Y_t^i-Y_\beta(\rho_t^{N,Y},X_t^i))dB_t^{Y,i}\,,\quad i\in[N],
		\label{eqV}
	\end{numcases}
\end{subequations}
where
\begin{equation}\label{YaN}
	Y_\beta(\rho_t^{N,Y},X_t^i)=\frac{\int_{\RR^{d}}y\omega_{-\beta}^{\mc{E}}(X_t^i,y)\rho^{N,Y}(t,dy)}{\int_{\RR^{d}}\omega_{-\beta}^{\mc{E}}(X_t^i,y)\rho^{N,Y}(t,dy)}, \quad \rho^{N,Y}(t,dy)=\frac{1}{N}\sum_{i=1}^N\delta_{Y_t^i}dy\,,
\end{equation}
and 
\begin{equation}\label{XaN}
	X_{\alpha,\beta}(\rho_t^{N,X})=\frac{\int_{\RR^{d}}x\omega_{\alpha}^{\mc{E}}(x,Y_\beta(\rho_t^{N,Y},x))\rho^{N,X}(t,dx)}{\int_{\RR^{d}}\omega_{\alpha}^{\mc{E}}(x,Y_\beta(\rho_t^{N,Y},x))\rho^{N,X}(t,dx)}, \quad \rho^{N,X}(t,dx)=\frac{1}{N}\sum_{i=1}^N\delta_{X_t^i}dx\,.
\end{equation}
Here $\lambda_1,\lambda_2,\sigma_1,\sigma_2>0$ are drift and diffusion parameters, $\{B_t^{m,i}\}_{m=X,Y;i\in [N]}\in \R^d$ are independent standard  Brownian motions, and $D(X):=\diag(X_1,\dots,X_d)$ (anisotropic), or $D(X):= |X|$ (isotropic), for all $X\in \R^d$. However, in the subsequent analysis, we will specifically focus on the anisotropic case, and let $\lambda_1=\lambda_2=\lambda$ and $\sigma_1=\sigma_2=\sigma$ for simplicity.
The system is complemented with identically and independently distributed (i.i.d.) initial data $\{(X_0^{i},Y_0^{i})\}_{i\in [N]}$ with respect to the common law $f_0$.
The first term on the right hand-side of \eqref{particle} is a drift that pulls the particles towards a current consensus point $\left(X_{\alpha,\beta}(\rho_t^{N,X}),Y_\beta(\rho_t^{N,Y},X_t^i)\right)$, in which $\omega_{\alpha}^{\mc{E}}$ is a weight function defined as
\begin{equation}\label{eq:weight_function}
	\omega_{\alpha}^{\mc{E}}(x,y):=\exp\left(-\alpha \TE(x,y)\right)\,, \quad \text{for all } (x,y) \in \R^d\times \R^d\,.
\end{equation}
The introduction of the second stochastic term in \eqref{particle} is intended to promote exploration of the energy landscape of the objective function. When the consensus is achieved, meaning $(X^i,Y^i)=\left(X_{\alpha,\beta}(\rho^{N,X}),Y_\beta(\rho^{N,Y},X^i)\right)$, both the drift and diffusion terms vanish. The choice of the weight function \eqref{eq:weight_function} comes from  the  well-known Laplace's principle \cites{miller2006applied,Dembo2010}, which states that for any probability measure $\mu\in\mc{P}( \RR^d )$, there holds for any fixed $y\in\R^{d}$,
\begin{equation}\label{lap_princ}
	\lim\limits_{\alpha\to\infty}\left(-\frac{1}{\alpha}\log\left(\int_{\RR^{d} }\omega_\alpha^{\TE}(x,y)\mu(d x)\right)\right)=\inf\limits_{x \in \rm{supp }(\mu)} \TE(x,y)\,.
\end{equation}
Then the intuition behind the dynamics stems from Laplace's principle, which implies that for any fixed $x$, it holds that $Y_\beta(\rho^Y,x)\to\arg\max\limits_{y\in \R^d}\TE(x,y)=:\bar y(x)$ as $\beta\to\infty$. This leads to $\overline \TE(x)=\TE(x,\bar y(x))\approx\TE(x,Y_\infty(\rho^Y,x))$.
Meanwhile, we have that $$X_{\alpha,\beta=\infty}(\rho^{X})=\frac{\int_{\RR^{d}}x\omega_{\alpha}^{\mc{E}}(x,Y_\infty(\rho^{Y},x))\rho^{X}(dx)}{\int_{\RR^{d}}\omega_{\alpha}^{\mc{E}}(x,Y_\infty(\rho^{Y},x))\rho^{X}(dx)}\approx\frac{\int_{\RR^{d}}x\omega_{\alpha}^{\mc{E}}(x,\overline y(x))\rho^{X}(dx)}{\int_{\RR^{d}}\omega_{\alpha}^{\mc{E}}(x,\overline y(x))\rho^{X}(dx)}=\frac{\int_{\RR^{d}}x\omega_{\alpha}^{\overline\TE}(x)\rho^{X}(dx)}{\int_{\RR^{d}}\omega_{\alpha}^{\overline\TE}(x)\rho^{X}(dx)}.$$
Applying Laplace's principle once more, one would expect that $X_{\alpha=\infty,\beta=\infty}\approx \arg\min \overline \TE(x)=x^*$ and at the same time $Y_\infty(\rho^Y,x^*)=\bar y(x^*)=y^*$.

The theoretical analysis of the convergence behavior for a CBO method can be approached from two distinct perspectives. The first approach entails a direct examination of the behavior at the microscopic level of individual particles, as detailed in \cites{ha2021convergence}, through an analysis of a particle system with common noise. Conversely, the approach embraced in this paper pivots to an examination at the macroscopic scale, concentrating on the mean-field equation that emerges as the number $N$ of particles approaches infinity. This macroscopic or mean-field approach has been effectively applied in a series of prior studies \cites{fornasier2022anisotropic, fornasier2022convergence, fornasier2021consensus1, huang2023global}.
Indeed, as the number of particles $N\to\infty$, the mean-field limit \cites{huang2022mean,gerber2023mean} result would imply that our CBO particle dynamic \eqref{particle} well approximates solutions of the following  mean-field  kinetic Mckean--Vlasov type equations
\begin{subequations}\label{MVeq}
	\begin{numcases}{}
		d\OX_t=-\lambda(\OX_t-X_{\alpha,\beta}(\rho_t^X))dt+\sigma D(\OX_t-X_{\alpha,\beta}(\rho_t^X))dB_t^X, \label{eqX}\\
		d\OY_t=-\lambda(\OY_t-Y_\beta(\rho_t^Y,\OX_t))dt+\sigma D(\OY_t-Y_\beta(\rho_t^Y,\OX_t))dB_t^Y\,, \label{eqV}
	\end{numcases}
\end{subequations}
where  
\begin{equation}\label{Ya}
	Y_\beta(\rho_t^Y,\OX_t)=\frac{\int_{\RR^{d}}y\omega_{-\beta}^{\mc{E}}(\OX_t,y)\rho^Y(t,dy)}{\int_{\RR^{d}}\omega_{-\beta}^{\mc{E}}(\OX_t,y)\rho^Y(t,dy)}, \quad \rho^Y(t,dy)=\int_{\RR^{d}}f(t,dx,dy)\,,
\end{equation}
and
\begin{equation}\label{Xa}
	X_{\alpha,\beta}(\rho_t^X)=\frac{\int_{\RR^{d}}x\omega_{\alpha}^{\mc{E}}(x,Y_\beta(\rho_t^Y,x))\rho^X(t,dx)}{\int_{\RR^{d}}\omega_{\alpha}^{\mc{E}}(x,Y_\beta(\rho_t^Y,x))\rho^X(t,dx)}, \quad \rho^X(t,dx)=\int_{\RR^{d}}f(t,dx,dy)\,,
\end{equation}
with $f(t,x,v)$ being  the distribution of $(\OX_t,\OV_t)$ at time $t$, which makes the set of equations \eqref{MVeq} nonlinear. 

 The current paper centers on investigating the convergence  of the proposed CBO variant in the context of finding global min-max solutions. The well-posedness results regarding the CBO particle system \eqref{particle} and its mean-field dynamics \eqref{MVeq} have  been omitted from this work. However, it is worth noting that these results closely resemble the well-posedness theorems presented in \cite[Theorem 3, Theorem 6]{huang2022consensus}, where a CBO dynamic is introduced for two player zero-sum games with the goal of finding saddle points satisfying Definition \ref{defnash}.  Moreover, in the following we shall assume the solution to \eqref{MVeq} has the regularity $\sup_{t\in[0,T]}\EE[|\OX_t|^4+|\OY_t|^4]<\infty$ for  any time horizon $T>0$, which can be guaranteed by the assumption on the initial data $\EE[|X_0|^4+|Y_0|^4]<\infty$.

The main objective of this work is to establish the convergence of the mean-field dynamics $(\OX_t,\OY_t)$ in \eqref{MVeq} to the global min-max point $(x^*,y^*)$ as $t$ and $\alpha$ approach infinity. Inspired by \cite{fornasier2021consensus1}, we introduce the variance functions as follows:
\begin{equation}\label{eq:defi_V_m}
	\mc V^X(t):=\EE[|\OX_t-x^*|^2]\,, \text{and} \quad \mc V^Y(t):= \EE[|\OY_t-y^*|^2]\,, \quad \text{for} \ t>0\,.
\end{equation}
By analyzing the decay behavior of the (cumulative) variance function $\mc V(t):=\mc V^X(t) +\mc V^Y(t)$ we establish the convergence of the CBO dynamics to the global min-max point. Specifically, we demonstrate that the function $V(t)$ decays exponentially, achieving any desired level of accuracy $\varepsilon>0$ with a decay rate controllable through the parameters of the CBO method.



\section{Global convergence in mean-field law}
\label{sec:convergence}

In this section, we present our main result about the global convergence in mean-field law for objective functions satisfying the following conditions. Our proof methodology draws upon the approaches
found in \cite{fornasier2021consensus1}, yet it introduces a novel perspective
through the application of our new CBO model.
\begin{assum}\label{assum2}
	Throughout this section,  the objective function $\TE \in \mc{C}(\R^{2d})$ satisfies
	\begin{enumerate}[(i)]
		\item there exist some constants $\underline C_\TE$ and $\overline C_\TE$ such that
		\begin{equation}
			\underline C_\TE \leq \TE(x,y)\leq \overline C_{\TE}\quad \mbox{ for any }x,y\in \RR^d\,.
		\end{equation}
		\item there exists some $L_\TE>0$ such that for any $x_1,x_2,y_1,y_2\in \RR^d$ it holds that
		\begin{equation}
			|\TE(x_1,y_1)-\TE(x_2,y_2)|\leq L_{\TE}(1+|x_1|+|x_2|+|y_1|+|y_2|)(|x_1-x_2|+|y_1-y_2|)\,.
		\end{equation}
		\item for any  $x\in \RR^{d}$, there exists some $\bar y(x)$ such that
		\begin{equation}
			\bar y(x)=\arg\max_{y\in\R^{d}}\TE(x,y)\,.
		\end{equation}
		Especially $\bar y(x^*)=y^*$ and $x^*=\arg\min_{x\in \R^{d}}\overline\TE(x)=\arg\min_{x\in \R^{d}}\TE(x,\bar y(x))$.  Moreover there exists some constants $\bar{c}_1,\bar{c}_2>0$ such that
		\begin{align}
			&|\bar y(x_1)-\bar y(x_2)|\leq \bar{c}_1|x_1-x_2|\quad  \mbox{ for any }x_1,x_2\in \RR^{d}\\
			&|\bar y(x)|\leq \bar{c}_2\quad  \mbox{ for any }x \in \RR^{d} \,.
		\end{align}
		\item  there exist $\TE_\infty,\eta,\nu,R_0>0$  such that for all $x\in \RR^{d}$,
		\begin{equation}
			\eta|y-\bar y(x)|\leq |\TE(x,y)-\TE(x,\bar y(x))|^\nu \mbox{ for all }y\in B_{R_0}(\bar y(x))\,,
		\end{equation}
		\begin{equation}
			\TE_\infty<\TE(x,\bar y(x))-\TE(x,y) \mbox{ for all } y\in (B_{R_0}(\bar y(x))^c\,.
		\end{equation}
     \item  for each $q>0$ there exists $r\in (0,R_0]$   such that
		\begin{equation}
			 \sup_{x\in\RR^d}\sup_{y\in B_{r}(\bar y(x))}|\TE(x,y)-\TE(x,\bar y(x))|\leq q  .
		\end{equation}
	
			\item  there exist $\TE_\infty^*,\eta^*>0$  such that
	\begin{equation}
		\eta^*|x-x^*|\leq |\overline \TE(x)-\overline \TE(x^*)|^\nu \mbox{ for all }x\in B_{R_0}(x^*)\,,
	\end{equation}
	\begin{equation}
		\TE_\infty^*<\overline\TE(x)-\overline \TE(x^*) \mbox{ for all } x\in (B_{R_0}(x^*)^c\,,
	\end{equation}
	where $\nu,R_0>0$ are the constants as in (iv).
	\item  for any $q>0$, there exists some $r\in (0,R_0]$ such that
	\begin{equation}
		|\overline\TE(x)-\overline\TE(x^*)|\leq q \mbox{ for all }x\in B_{r}(x^*)\,.
	\end{equation}
	\end{enumerate}
\end{assum}
Note that $B_r(x)$, here and in the sequel, represents the ball centered in $x\in \R^d$ with radius $r>0$ with respect to the $\ell^{\infty}$ norm, i.e., $B_r(x):=\{x' \in \R^d : \max_{k \in [d]}|(x')^k-x^k| \leq r \}$.

\begin{rmk}
In the above assumption, condition $(i)$ states that the objective function is bounded from above and below. This is achievable by considering the function $\tanh(\TE)$ instead, which is bounded and retains the same min-max solution as $\TE$. Condition $(ii)$ requires $\TE$ to be locally Lipschitz. $(iii)$ is tailored to min-max problems, necessitating that the optimized function $\bar y(x)$ be Lipschitz continuous and uniformly bounded, aligning with the maximum theorem \cite[Theorem 3.6]{beavis1990optimisation}. It is noteworthy that our numerical experiments suggest the possibility of relaxing or even eliminating condition $(iii)$. The remaining conditions $(iv)-(vii)$ are standard in the analysis of CBO methods (c.f. \cite{fornasier2021consensus1, carrillo2023fedcbo,chenchene2023consensus,fornasier2022convergence}) and pertain to the tractability of the landscape of $\TE$ around $(x^*, y^*)$ and in the farfield. The assumption does not mandate any convexity-concavity or differentiability requirements; for example, one may check the nonconvex--nonconcave nondifferentiable function $\mathcal E(x,y)=- (y-(1+x^2)^{-1}\sin(x))^2(1+(y-(1+x^2)^{-2}\sin(x))^2)^{-1}
+(1+ \sin^2 x)^{-1}| \tanh x|$ for $(x,y)\in\mathbb R^2$ which has global min-max point $(0,0)$ and satisfies all the conditions (i)--(vii).
\end{rmk}

Next we recall the variance functions
\begin{equation}
\mc{V}^X(t)=\EE[|\OX_t-x^*|^2]\mbox{ and }\mc{V}^Y(t)=\EE[|\OY_t-y^*|^2].
\end{equation}
Then one has the following lemma for a differential inequality of variance functions.
\begin{lem}\label{lemV}
	Let $\TE$ satisfy Assumption \ref{assum2} (i)-(ii), and we assume that $\lambda, \sigma$ satisfy
$2\lambda-\sigma^2>0$. Then the variance function $\mc{V}(t)=\mc{V}^X(t)+\mc{V}^Y(t)$  satisfies
\begin{align}
	\frac{d \mc{V}(t)}{dt}&\leq -(2\lambda-\sigma^2)\mc{V}(t)+2 (\lambda+\sigma^2)(\mc{V}(t))^{\frac{1}{2}}\left(|x^*-X_{\alpha,\beta}(\rho_t^X)|+\EE[|y^*-Y_\beta(\rho_t^Y,\OX_t)|]\right)\nn\\
	&\quad +\sigma^2 \left(|x^*-X_{\alpha,\beta}(\rho_t^X)|^2+\EE[|y^*-Y_\beta(\rho_t^Y,\OX_t)|^2]\right)\,.
\end{align}
\end{lem}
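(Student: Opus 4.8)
The plan is to apply It\^o's formula to the squared distances $|\OX_t-x^*|^2$ and $|\OY_t-y^*|^2$, take expectations so that the stochastic integrals drop out, and then estimate the resulting drift. The assumed $L^4$-moment regularity $\sup_{t\in[0,T]}\EE[|\OX_t|^4+|\OY_t|^4]<\infty$ guarantees that the diffusion terms are genuine martingales with zero expectation, so this step is legitimate. Because the two equations in \eqref{MVeq} share the same parameters $\lambda,\sigma$ and the anisotropic $D(\cdot)=\diag(\cdot)$, I would run the same computation for the $X$- and $Y$-parts and add the two resulting differential inequalities at the end.

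For the $X$-part, write $c_X:=X_{\alpha,\beta}(\rho_t^X)$, which is \emph{deterministic} since it only depends on the law $\rho_t^X$. Applying It\^o componentwise (the diagonal structure of $D$ and the independence of the components of $B^X$ kill all cross terms) and taking expectations gives
\[
\frac{d\mc V^X(t)}{dt}=-2\lambda\,\EE\big[(\OX_t-x^*)\cdot(\OX_t-c_X)\big]+\sigma^2\,\EE\big[|\OX_t-c_X|^2\big].
\]
Inserting the splitting $\OX_t-c_X=(\OX_t-x^*)+(x^*-c_X)$ into both terms produces a pure variance contribution, a linear cross term, and a constant term. The variance contributions combine to the coefficient $-(2\lambda-\sigma^2)$, which is a genuine decay under the hypothesis $2\lambda-\sigma^2>0$. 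The linear cross term carries coefficient $-2\lambda$ from the drift and $+2\sigma^2$ from the diffusion, i.e. $2(\sigma^2-\lambda)$, whose modulus is bounded by $2(\lambda+\sigma^2)$; since $c_X$ is deterministic, Jensen's inequality $\EE[|\OX_t-x^*|]\le(\mc V^X(t))^{1/2}$ lets me factor it out as $2(\lambda+\sigma^2)(\mc V^X(t))^{1/2}|x^*-c_X|$, and the remaining constant term is $\sigma^2|x^*-c_X|^2$.

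The $Y$-part follows the identical bookkeeping with $c_Y:=Y_\beta(\rho_t^Y,\OX_t)$, and this is where the main (and essentially only) difficulty lies: unlike $c_X$, the consensus point $c_Y$ is a \emph{random} variable, because it depends on $\OX_t$ through the weight $\omega_{-\beta}^{\TE}(\OX_t,\cdot)$. Consequently $|x^*-c_X|$ cannot simply be pulled out of the expectation; instead the cross term must be kept inside as $\EE\big[|\OY_t-y^*|\,|y^*-c_Y|\big]$ and estimated by Cauchy--Schwarz, while the constant term becomes $\sigma^2\,\EE[|y^*-c_Y|^2]$. This yields
\[
\frac{d\mc V^Y(t)}{dt}\le-(2\lambda-\sigma^2)\mc V^Y(t)+2(\lambda+\sigma^2)\,\EE\big[|\OY_t-y^*|\,|y^*-c_Y|\big]+\sigma^2\,\EE\big[|y^*-c_Y|^2\big].
\]

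Finally I would add the $X$- and $Y$-inequalities. Using $(\mc V^X(t))^{1/2}\le(\mc V(t))^{1/2}$ and $(\mc V^Y(t))^{1/2}\le(\mc V(t))^{1/2}$ collects both cross terms under the common factor $(\mc V(t))^{1/2}$, the two decay terms combine to $-(2\lambda-\sigma^2)\mc V(t)$, and the two constant terms to $\sigma^2(|x^*-c_X|^2+\EE[|y^*-c_Y|^2])$, giving the claimed inequality. The one point genuinely deserving care is the moment bookkeeping for the random $Y$-consensus point: Cauchy--Schwarz naturally controls $\EE[|\OY_t-y^*|\,|y^*-c_Y|]$ by $(\mc V^Y(t))^{1/2}(\EE[|y^*-c_Y|^2])^{1/2}$, so reconciling this with the first-moment factor $\EE[|y^*-c_Y|]$ appearing in the statement is the delicate estimate I would verify, since for a random $c_Y$ the $L^1$ and $L^2$ norms of $|y^*-c_Y|$ no longer coincide.
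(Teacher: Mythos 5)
Your proposal follows exactly the paper's route: It\^o's formula on $|\OX_t-x^*|^2$ and $|\OY_t-y^*|^2$, expectations to remove the martingale parts, the splitting around $x^*$ (resp.\ $y^*$), and Jensen/Cauchy--Schwarz on the cross terms; your $X$-estimate coincides with the paper's line by line. The point you flag at the end, however, deserves to be called what it is: a genuine gap, and one that sits in the paper rather than in your argument. The paper proves only the $X$-inequality and disposes of the $Y$-inequality with the words ``Similarly one also has''; but the two cases are not similar in the one respect that matters. $X_{\alpha,\beta}(\rho_t^X)$ is deterministic, so $|x^*-X_{\alpha,\beta}(\rho_t^X)|$ factors out of every expectation, whereas $Y_\beta(\rho_t^Y,\OX_t)$ is a random variable correlated with $\OY_t$ through $\OX_t$. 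Consequently the cross term $\EE[(\OY_t-y^*)\cdot(y^*-Y_\beta(\rho_t^Y,\OX_t))]$ can only be bounded, via Cauchy--Schwarz, by $(\mc{V}^Y(t))^{1/2}\vpran{\EE[|y^*-Y_\beta(\rho_t^Y,\OX_t)|^2]}^{1/2}$, and since $\EE[|Z|]\leq (\EE[|Z|^2])^{1/2}$ goes the wrong way, the first-moment factor $\EE[|y^*-Y_\beta(\rho_t^Y,\OX_t)|]$ printed in the lemma is a strictly stronger claim than what this argument (yours or the paper's) delivers.

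So what is actually proved is the lemma with $\vpran{\EE[|y^*-Y_\beta(\rho_t^Y,\OX_t)|^2]}^{1/2}$ in place of $\EE[|y^*-Y_\beta(\rho_t^Y,\OX_t)|]$ in the cross term. This weaker form is all that the rest of the paper needs: in the proof of Theorem \ref{theorem:main} the second moment $\EE[|y^*-Y_\beta(\rho_t^Y,\OX_t)|^2]$ must be controlled anyway for the $\sigma^2$-term, by estimates of exactly the type \eqref{eq47'} combined with the Lipschitz bound on $\bar y$ from Assumption \ref{assum2} (iii), so substituting the $L^2$ norm for the first moment in Lemma \ref{lemV} costs only an adjustment of constants downstream. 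In short: your proof is correct and is the paper's proof done carefully; the reconciliation you were unsure about cannot in fact be carried out as the lemma is stated, and the honest statement is the $L^2$ version you derived.
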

\begin{proof}
Applying It\^o's formula gives that
	\begin{align}
d|\OX_t-x^*|^2=2\lambda(\OX_t-x^*)d\OX_t+\sigma^2|\OX_t-X_{\alpha,\beta}(\rho_t^X)|^2dt\,,
\end{align}
which implies
\begin{align}
\frac{d \mc{V}^X(t)}{dt}&=-2\lambda\EE[(\OX_t-x^*)\cdot(\OX_t-X_{\alpha,\beta}(\rho_t^X))]+\sigma^2\EE[|\OX_t-X_{\alpha,\beta}(\rho_t^X)|^2]\nn\\
&=-2\lambda\mc{V}^X(t)-2\lambda\EE[(\OX_t-x^*)\cdot(x^*-X_{\alpha,\beta}(\rho_t^X))]+\sigma^2\EE[|\OX_t-X_{\alpha,\beta}(\rho_t^X)|^2]\nn\\
&\leq -(2\lambda-\sigma^2)\mc{V}^X(t)+2(\lambda+\sigma^2)(\mc{V}^X(t))^{\frac{1}{2}}|x^*-X_{\alpha,\beta}(\rho_t^X)|+\sigma^2 |x^*-X_{\alpha,\beta}(\rho_t^X)|^2\,.
\end{align}
Similarly one also has
\begin{align}
	\frac{d \mc{V}^Y(t)}{dt}\leq -(2\lambda-\sigma^2)\mc{V}^Y(t)+2(\lambda+\sigma^2)(\mc{V}^Y(t))^{\frac{1}{2}}\EE[|y^*-Y_\beta(\rho_t^Y,\OX_t)|]+\sigma^2 \EE[|y^*-Y_\beta(\rho_t^Y,\OX_t)|^2]\,.
\end{align}
This completes the proof.
\end{proof}

The main idea underpinning our main convergence result consists of showing that
\begin{equation}
	\frac{d \mc{V}(t)}{dt}\leq -\frac{1}{2}(2\lambda-\sigma^2) \mc{V}(t)\,,
\end{equation}
which formally requires that
\begin{align}
	&2(\lambda+\sigma^2)(\mc{V}(t))^{\frac{1}{2}}\left(|x^*-X_{\alpha,\beta}(\rho_t^X)|+\EE[|y^*-Y_\beta(\rho_t^Y,\OX_t)|]\right)\nn\\
 &+\sigma^2 \left(|x^*-X_{\alpha,\beta}(\rho_t^X)|^2+\EE[|y^*-Y_\beta(\rho_t^Y,\OX_t)|^2]\right)\leq\frac{1}{2}(2\lambda-\sigma^2) \mc{V}(t)\,.
\end{align}
This condition is fulfilled by ensuring that $|x^*-X_{\alpha,\beta}(\rho_t^X)|^2+\EE[|y^*-Y_\beta(\rho_t^Y,\OX_t)|^2]$ remains sufficiently small. To establish this, we shall utilize a quantitative estimate of the Laplace principle.

\begin{proposition}\label{propY}
	Let Assumption \ref{assum2} (i)-(v) hold. For each  $x\in \R^{d}$ and $r\in (0,R_0]$,  define  $$\TE_r(x):=\sup_{y\in B_r(\bxy)}\left|\TE(x,y)-\TE(x,\bxy)\right|.$$ Let $0<q\leq \frac{\TE_\infty}{2}$, and define
	$r:=\max\left\{s\in(0,R_0]:~\sup_{x\in\RR^d}\TE_s(x)\leq q\}\right\}$. Then for any fixed $t>0$ it holds that
	\begin{align}\label{318}
		|\bxy-Y_\beta(\rho_t^Y,x)|\leq& \frac{(2q)^\nu}{\eta}+\frac{\exp\left(-\beta q\right)}{ \rho_t^Y (\{y:y\in B_r(\bxy)\})} \int|y-\bxy|\rho_t^Y(dy)\,.
	\end{align}
\end{proposition}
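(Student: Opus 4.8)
The plan is to estimate the normalized weighted average directly. Since $Y_\beta(\rho_t^Y,x)$ is the barycenter of $\rho_t^Y$ against the weight $\omega_{-\beta}^{\mc{E}}(x,\cdot)=\exp(\beta\TE(x,\cdot))$, I would first write
$$Y_\beta(\rho_t^Y,x) - \bxy = \frac{\int (y-\bxy)\,\omega_{-\beta}^{\mc{E}}(x,y)\,\rho_t^Y(dy)}{\int \omega_{-\beta}^{\mc{E}}(x,y)\,\rho_t^Y(dy)}$$
and pass the absolute value inside the numerator. Writing $M:=\TE(x,\bxy)=\overline\TE(x)$ for the maximal value, the ratio is invariant under rescaling the weight by $e^{-\beta M}$, so I replace $\omega_{-\beta}^{\mc{E}}(x,y)$ by $w(y):=\exp(-\beta(M-\TE(x,y)))\in(0,1]$, which is well behaved because $M-\TE(x,y)\ge 0$ everywhere. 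The whole estimate then reduces to bounding the numerator $\int |y-\bxy|\,w(y)\,\rho_t^Y(dy)$ from above and the denominator $\int w(y)\,\rho_t^Y(dy)$ from below.

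For the numerator I would split $\R^d$ according to the superlevel set $\Omega:=\{y:\,M-\TE(x,y)\le 2q\}$. The first key observation is that $\Omega\subseteq B_{R_0}(\bxy)$: by the far-field part of Assumption \ref{assum2}(iv), any $y\in(B_{R_0}(\bxy))^c$ satisfies $M-\TE(x,y)>\TE_\infty\ge 2q$ (here using $q\le\TE_\infty/2$), so such $y$ cannot lie in $\Omega$. Consequently the local growth part of (iv) is admissible on $\Omega$ and yields $\eta|y-\bxy|\le |\TE(x,y)-M|^\nu=(M-\TE(x,y))^\nu\le(2q)^\nu$, i.e. $|y-\bxy|\le (2q)^\nu/\eta$ on $\Omega$. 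Bounding the integral over $\Omega$ by this constant times the denominator produces the first term $(2q)^\nu/\eta$. On $\Omega^c$ I instead use $w(y)<e^{-2\beta q}$ by definition of $\Omega$, and bound $|y-\bxy|$ by the full first moment, giving the contribution $e^{-2\beta q}\int|y-\bxy|\,\rho_t^Y(dy)$.

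For the denominator I would restrict the integral to $B_r(\bxy)$, a choice that is legitimate since Assumption \ref{assum2}(v) guarantees that $\{s\in(0,R_0]:\sup_{x}\TE_s(x)\le q\}$ is nonempty, so the defining maximum for $r$ exists. On $B_r(\bxy)$ one has $M-\TE(x,y)\le \TE_r(x)\le q$, hence $w(y)\ge e^{-\beta q}$, which gives $\int w\,\rho_t^Y(dy)\ge e^{-\beta q}\,\rho_t^Y(B_r(\bxy))$. Dividing the $\Omega^c$ contribution by this lower bound converts $e^{-2\beta q}$ into $e^{-\beta q}$ and yields exactly the second term of the claim. The only genuine subtlety — and the reason the statement carries $(2q)^\nu$ rather than $q^\nu$ — is that the two thresholds must differ: the superlevel set is defined with margin $2q$, while the denominator is controlled on the smaller set $B_r(\bxy)$ where the deviation is only $q$, and it is precisely this gap that produces the honest exponential factor $e^{-\beta q}=e^{-2\beta q}/e^{-\beta q}$ instead of an $O(1)$ constant. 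I expect the bookkeeping ensuring $\Omega\subseteq B_{R_0}(\bxy)$ (so the growth bound may be invoked) together with the consistency of the $\ell^\infty$-ball convention used in (iv)–(v) to be the points requiring the most care.
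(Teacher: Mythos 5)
Your proof is correct, and it takes a genuinely different---in fact dual---decomposition from the paper's. The paper splits the error integral over a spatial ball $B_{\tilde r}(\bxy)$ and its complement, with an adaptive, $x$-dependent radius $\tilde r=(q+\TE_r(x))^\nu/\eta$: the near part is bounded trivially by $\tilde r\le (2q)^\nu/\eta$, while on the far part Assumption \ref{assum2}(iv) is used \emph{in the inverted direction} (distance $>\tilde r$ forces a value gap of at least $(\tilde r\eta)^{1/\nu}=q+\TE_r(x)$, with $q\le \TE_\infty/2$ handling the region outside $B_{R_0}(\bxy)$); this is then combined with the same denominator bound you use, phrased there as Markov's inequality, $\norm{\omega_{-\beta}^{\TE}(x,\cdot)}_{L_1(\rho_t^Y)}\ge \exp\left(\beta(\TE(x,\bxy)-\TE_r(x))\right)\rho_t^Y(B_r(\bxy))$, so that the exponents telescope to exactly $-\beta q$. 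You instead split over the level set $\Omega=\{y:\TE(x,\bxy)-\TE(x,y)\le 2q\}$ and apply (iv) \emph{in the direct direction} on the near-in-value set (value gap $\le 2q$ forces distance $\le(2q)^\nu/\eta$, legitimate because $q\le\TE_\infty/2$ gives $\Omega\subseteq B_{R_0}(\bxy)$), while on $\Omega^c$ the normalized weight is $<e^{-2\beta q}$ by definition. What your route buys is the elimination of the adaptive radius and of the paper's auxiliary verification $\tilde r\ge r$, together with more transparent exponent bookkeeping ($e^{-2\beta q}/e^{-\beta q}=e^{-\beta q}$); what the paper's route buys is a marginally sharper first term, $(q+\TE_r(x))^\nu/\eta$ in place of $(2q)^\nu/\eta$ (immaterial for the stated bound), and adherence to the quantitative Laplace principle template of \cite{fornasier2021consensus1}, which the paper reuses without further detail in Proposition \ref{propX} (``Similar to the proof of Proposition \ref{propY} it is easy to get\dots''). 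Both arguments consume the assumptions in identical roles, so either proof could replace the other.
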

\begin{rmk}
	Such $r>0$ exists because of Assumption \ref{assum2}-$(v)$, and it is independent of $x$. The right hand-side of equation \eqref{318} can become small by selecting a sufficiently small value for $q$ and a suitably large value for $\beta$.
\end{rmk}
\begin{proof}
   Let $\tilde r\geq r>0$, and using Jensen's inequality one can deduce
   \begin{align}
   |\bxy-Y_\beta(\rho_t^Y,x)|&\leq \int_{B_{\tilde r}(\bxy)}|y-\bxy|\frac{\omega_{-\beta}^{\TE}(x,y)\rho_t^Y(dy)}{\|\omega_{-\beta}^{\TE}(x,\cdot)\|_{L_1(\rho_t^Y)}}\nn\\
   	&+\int_{(B_{\tilde r}(\bxy))^c}|x-\bxy|\frac{\omega_{-\beta}^{\TE}(x,y)\rho_t^Y(dy)}{\|\omega_{-\beta}^{\TE}(x,\cdot)\|_{L_1(\rho_t^Y)}}\,.
   \end{align}
The first term is bounded by $\tilde r$ since $|y-\bxy|\leq \tilde r$ for all $y\in B_{\tilde r}(\bxy)$. Moreover, it follows from Markov's inequality that
{\begin{align}
	&\|\omega_{-\beta}^{\TE}(x,\cdot)\|_{L_1(\rho_t^Y)}\nn\\
	\geq& \exp\left(\beta(\TE(x,\bxy)-\TE_r(x))\right)\rho_t^Y\left(\{y:\exp(\beta\TE(x,y))\geq \exp\left(\beta(\TE(x,\bxy)-\TE_r(x))\right)\}\right)\nn\\
=&	\exp\left(\beta(\TE(x,\bxy)-\TE_r(x))\right)\rho_t^Y(\{y: \TE(x,y)\geq \TE(x,\bxy)-\TE_r(x)\})\nn\\
\geq& \exp\left(\beta(\TE(x,\bxy)-\TE_r(x))\right)\rho_t^Y(\{y:y\in B_r(\bxy)\})\,.
\end{align}}
Hence for the second term we have
{ \begin{align}
&\int_{(B_{\tilde r}(\bxy))^c}|y-\bxy|\frac{\omega_{-\beta}^{\TE}(x,y)\rho_t^Y(dy)}{\|\omega_{-\beta}^{\TE}(x,\cdot)\|_{L_1(\rho_t^Y)}}\nn\\
\leq&\frac{1}{\exp\left(\beta(\TE(x,\bxy)-\TE_r(x))\right)\rho_t^Y(\{y:y\in B_r(\bxy)\})} \int_{(B_{\tilde r}(\bxy))^c}|y-\bxy|\omega_{-\beta}^{\TE}(x,y)\rho_t^Y(dy)\nn\\
\leq&\frac{\exp(\beta\sup_{y\in (B_{\tilde r}(\bxy))^c}\TE(x,y))}{\exp\left(\beta(\TE(x,\bxy)-\TE_r(x))\right)\rho_t^Y(\{y:y\in B_r(\bxy)\})} \int|y-\bxy|\rho_t^Y(dy)\nn\\
=&\frac{\exp\left(\beta(\sup_{y\in (B_{\tilde r}(\bxy))^c}\TE(x,y)+\TE_r(x)-\TE(x,\bxy))\right)}{ \rho_t^Y (\{y:y\in B_r(\bxy)\})} \int|y-\bxy|\rho_t^Y(dy)\,.
\end{align}}
Thus for any $\tilde r\geq r>0$ we obtain
{ \begin{align}\label{14}
	&|\bxy-Y_\beta(\rho_t^Y,x)|\nn\\
	\leq& \tilde r+\frac{\exp\left(\beta(\sup_{y\in (B_{\tilde r}(\bxy))^c}\TE(x,y)+\TE_r(x)-\TE(x,\bxy))\right)}{ \rho_t^Y (\{y:y\in B_r(\bxy)\})} \int|y-\bxy|\rho_t^Y(dy)\,.
\end{align}}
Next we choose $\tilde r=\frac{(q+\TE_r(x))^\nu}{\eta}$. Furthermore it holds that
\begin{align}
	\tilde r&=\frac{(q+\TE_r(x))^\nu}{\eta}\geq \frac{\TE_r(x)^\nu}{\eta}=\frac{\left(\sup_{y\in B_r(\bxy)}\left|\TE(x,y)-\TE(x,\bxy)\right|\right)^\nu}{\eta}\nn\\
	&\geq \sup_{y\in B_r(\bxy)}|y-\bxy|=r
\end{align}
by using Assumption \ref{assum2}-$(iv)$. Additionally one notices that
\begin{align}
	&\sup_{y\in (B_{\tilde r}(\bxy))^c}\TE(x,y)-\TE(x,\bxy)\nn\\
 \leq&
 \begin{cases}
    - (\tilde r\eta)^{\frac{1}{\nu}}, \mbox{ when } y\in (B_{\tilde r}(\bxy))^c \cap B_{R_0}(\bxy)\\
      -\TE_\infty, \mbox{ when } y\in (B_{\tilde r}(\bxy))^c \cap (B_{R_0}(\bxy))^c
 \end{cases}
	\leq - (\tilde r\eta)^{\frac{1}{\nu}}\,,
\end{align}
where in the last inequality we have used the fact that $ (\tilde r\eta)^{\frac{1}{\nu}}=q+\TE_r(x)\leq 2q\leq \TE_\infty$.
Therefore
\begin{align}
	\sup_{y\in (B_{\tilde r}(\bxy))^c}\TE(x,y)+\TE_r(x)-\TE(x,\bxy)&\leq - (\tilde r\eta)^{\frac{1}{\nu}}+\TE_r(x)\nn\\
	&= -q-\TE_r(x)+\TE_r(x)=-q\,.
\end{align}
Inserting this and the definition of $\tilde r$ into \eqref{14}, we obtain the desired result.
\end{proof}

\begin{proposition}\label{propX}
	Let $\TE$ satisfy Assumption \ref{assum2}. For any  $r\in (0,R_0]$,  define  $$\TE_r:=\sup_{x\in B_r(x^*)}\left|\overline \TE(x)-\overline \TE(x^*)\right|.$$ Let $0<q^*\leq \frac{\TE_\infty^*}{2}$, and define
	$r:=\max\left\{s\in(0,R_0]:~\TE_s\leq q^*\}\right\}$, then for any  $t\in[0,T]$ it holds
	\begin{align}
		|x^*-X_{\alpha,\beta}(\rho_t^X)|&\leq\frac{(2q^*)^\nu}{\eta^*}+\frac{\exp\left(-\alpha q^*\right)}{ \rho_t^X (\{x:x\in B_r(x^*)\})} \int|x-x^*|\rho_t^X(dx)\nn\\
		&+C_1^\alpha \left(\int_{\RR^{d}}|\bxy-Y_\beta(\rho_t^Y,x)|^2\rho^X(t,dx)\right)^{1/2}
		\,,
	\end{align}
where $C_1^\alpha>0$ depends only on $\alpha, \bar C_{\TE}, \underline C_{\TE},L_{\TE},\bar{c}_1, \EE[|\OX_0|^4],T,\lambda$ and $\sigma$.
\end{proposition}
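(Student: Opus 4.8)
The plan is to compare the actual consensus point $X_{\alpha,\beta}(\rho_t^X)$, whose weight $\omega_\alpha^{\TE}(x,Y_\beta(\rho_t^Y,x))$ is built from the \emph{approximate} inner maximizer $Y_\beta(\rho_t^Y,x)$, with an auxiliary consensus point built from the \emph{exact} maximizer $\bar y(x)$. Since $\TE(x,\bar y(x))=\overline\TE(x)$, I introduce
\begin{equation*}
\tilde X_{\alpha}(\rho_t^X):=\frac{\int_{\R^d}x\,\omega_\alpha^{\TE}(x,\bar y(x))\rho^X(t,dx)}{\int_{\R^d}\omega_\alpha^{\TE}(x,\bar y(x))\rho^X(t,dx)}=\frac{\int_{\R^d}x\,\omega_\alpha^{\overline\TE}(x)\rho^X(t,dx)}{\int_{\R^d}\omega_\alpha^{\overline\TE}(x)\rho^X(t,dx)},
\end{equation*}
and split $|x^*-X_{\alpha,\beta}(\rho_t^X)|\le |x^*-\tilde X_\alpha(\rho_t^X)|+|\tilde X_\alpha(\rho_t^X)-X_{\alpha,\beta}(\rho_t^X)|$. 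The first summand will produce the two ``Laplace'' terms, the second the discrepancy term.

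For the first summand I would repeat the quantitative-Laplace argument of Proposition \ref{propY} essentially verbatim, now with the envelope $\overline\TE$ in place of $\TE(x,\cdot)$, with $x^*$ in place of $\bar y(x)$, and with the minimization weight $e^{-\alpha\overline\TE}$ in place of the maximization weight $e^{\beta\TE}$. The function $\overline\TE$ is bounded (Assumption \ref{assum2}-(i) gives $\underline C_\TE\le\overline\TE\le\overline C_\TE$) and continuous by the maximum theorem via (ii)--(iii), while Assumptions (vi) and (vii) are precisely the analogues of (iv)--(v) needed in that argument. This yields exactly $\frac{(2q^*)^\nu}{\eta^*}+\frac{\exp(-\alpha q^*)}{\rho_t^X(B_r(x^*))}\int|x-x^*|\rho_t^X(dx)$.

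The second summand is the crux and produces the third term. Both $\tilde X_\alpha$ and $X_{\alpha,\beta}$ are $\rho^X(t,\cdot)$-weighted averages of $x$ with weights $a(x):=\omega_\alpha^{\overline\TE}(x)$ and $b(x):=\omega_\alpha^{\TE}(x,Y_\beta(\rho_t^Y,x))$, so at fixed $t$ I would use the stability identity
\begin{equation*}
\frac{\int x a\,\rho^X}{\int a\,\rho^X}-\frac{\int x b\,\rho^X}{\int b\,\rho^X}=\frac{\int x (a-b)\,\rho^X}{\int a\,\rho^X}-\frac{\int x b\,\rho^X}{\int b\,\rho^X}\cdot\frac{\int (a-b)\,\rho^X}{\int a\,\rho^X},
\end{equation*}
which bounds the difference by $\big(\int|x|\,|a-b|\,\rho^X\big)/\int a\,\rho^X$ plus $\tfrac{\int|x|b\,\rho^X}{\int b\,\rho^X}\cdot\big(\int|a-b|\,\rho^X\big)/\int a\,\rho^X$. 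The pointwise weight discrepancy is controlled, using (i) and $|e^{-\alpha u}-e^{-\alpha v}|\le\alpha e^{-\alpha\min(u,v)}|u-v|$, by $|a(x)-b(x)|\le \alpha e^{-\alpha\underline C_\TE}|\overline\TE(x)-\TE(x,Y_\beta(\rho_t^Y,x))|$; then the local Lipschitz bound (ii) together with the bounds on $\bar y$ from (iii) gives $|\TE(x,\bar y(x))-\TE(x,Y_\beta(\rho_t^Y,x))|\le L_\TE\big(1+2|x|+|\bar y(x)|+|Y_\beta(\rho_t^Y,x)|\big)\,|\bar y(x)-Y_\beta(\rho_t^Y,x)|$. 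Applying Cauchy--Schwarz to peel off $|\bar y(x)-Y_\beta(\rho_t^Y,x)|$ produces exactly the factor $\big(\int|\bar y(x)-Y_\beta(\rho_t^Y,x)|^2\rho^X(t,dx)\big)^{1/2}$, while the remaining factor $\big(\int |x|^2(1+2|x|+|\bar y(x)|+|Y_\beta(\rho_t^Y,x)|)^2\rho^X\big)^{1/2}$ is finite by the fourth-moment regularity $\sup_{t\in[0,T]}\EE[|\OX_t|^4]<\infty$ (whence the dependence on $\EE[|\OX_0|^4],T,\lambda,\sigma$) and the growth of $\bar y$ controlled via $\bar c_1$. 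Since both normalizations obey $\int a\,\rho^X,\int b\,\rho^X\ge e^{-\alpha\overline C_\TE}$ by (i), collecting the prefactors $\alpha e^{-\alpha\underline C_\TE}$, $e^{\alpha\overline C_\TE}$, $L_\TE$ and the moment bounds into one constant $C_1^\alpha$ gives the third term.

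The main obstacle is this second step: one must extract the $L^2$ norm of $\bar y(x)-Y_\beta(\rho_t^Y,x)$ cleanly through Cauchy--Schwarz while keeping every remaining moment factor finite (which forces the fourth-moment bound and bounding both the weighted average $\int|x|b\,\rho^X/\int b\,\rho^X$ and the $|Y_\beta(\rho_t^Y,x)|$ factor in terms of moments rather than pointwise quantities), and one must accept that the normalization is bounded below only by the exponentially small $e^{-\alpha\overline C_\TE}$, so $C_1^\alpha$ necessarily deteriorates as $\alpha\to\infty$. This is harmless for the convergence analysis, where the limit $t\to\infty$ is taken before $\alpha\to\infty$.
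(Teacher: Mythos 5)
Your proposal mirrors the paper's own proof step for step: the same auxiliary consensus point built from the exact maximizer $\bar y(x)$ (your $\tilde X_\alpha$ is the paper's $\OX_\alpha(\rho_t^X)$), the same triangle-inequality splitting, the same re-run of the quantitative Laplace argument of Proposition \ref{propY} for $|x^*-\OX_\alpha(\rho_t^X)|$ using Assumptions \ref{assum2}-(vi)--(vii), and the same stability-of-weighted-averages identity combined with the exponential-Lipschitz bound on the weights, the local Lipschitz bound on $\TE$, and Cauchy--Schwarz for the discrepancy term. One small remark on the moment factor: to get a constant depending on $\EE[|\OX_0|^4]$, $T$, $\lambda$, $\sigma$ (rather than on $\sup_{t\in[0,T]}\EE[|\OX_t|^4]$ as an unquantified a priori datum), you should, as the paper does, derive $\EE[|\OX_t|^4]\le \EE[|\OX_0|^4]e^{Ct}$ by It\^o's formula and Gr\"onwall, using $|X_{\alpha,\beta}(\rho_t^X)|^4\le e^{4\alpha(\bar C_\TE-\underline C_\TE)}\EE[|\OX_t|^4]$.

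There is, however, one genuine gap, and it sits exactly at the point you call the crux. You keep the factor $|Y_\beta(\rho_t^Y,x)|$ in the Lipschitz bound and claim the resulting moment factor $\big(\int|x|^2(1+2|x|+|\bar y(x)|+|Y_\beta(\rho_t^Y,x)|)^2\rho^X\big)^{1/2}$ is finite by the fourth-moment regularity of $\OX$ and the growth of $\bar y$. It is not: $|Y_\beta(\rho_t^Y,x)|$ is controlled by neither. It is a $\beta$-weighted average over $\rho_t^Y$, and the only generic bound is $|Y_\beta(\rho_t^Y,x)|\le e^{\beta(\bar C_\TE-\underline C_\TE)}\int|y|\rho_t^Y(dy)$, which is exponentially large in $\beta$ and involves moments of $\OY$ --- neither $\beta$ nor any $Y$-moment is in the admissible list for $C_1^\alpha$. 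This is not cosmetic: in the proof of Theorem \ref{theorem:main}, $\beta$ is chosen \emph{after} $\alpha$ precisely so that $C_1^\alpha e^{-\beta q_0}(\cdots)\le c_3/4$; if $C_1^\alpha$ grew like $e^{\beta(\bar C_\TE-\underline C_\TE)}$ this step could fail. The fix is cheap: bound $|Y_\beta(\rho_t^Y,x)|\le \bar c_2+|\bar y(x)-Y_\beta(\rho_t^Y,x)|$ using Assumption \ref{assum2}-(iii), so the Lipschitz estimate becomes a term linear in $s:=|\bar y(x)-Y_\beta(\rho_t^Y,x)|$ plus the quadratic term $L_\TE s^2$; then absorb the quadratic term using boundedness of $\TE$, via $\min\{L_\TE s^2,\,\bar C_\TE-\underline C_\TE\}\le \sqrt{L_\TE(\bar C_\TE-\underline C_\TE)}\,s$, which restores a bound linear in $s$ with a $\beta$-free constant, after which your Cauchy--Schwarz step goes through verbatim. (For what it is worth, the paper's own displayed estimate silently omits the $|Y_\beta(\rho_t^Y,x)|$ term from the Lipschitz factor, so it implicitly relies on some such reduction; your write-up at least records the term, but the way you propose to dispose of it would not preserve the claimed dependence of $C_1^\alpha$.)
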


\begin{proof}
	First, let us introduce
	\begin{equation}
		\OX_{\alpha}(\rho_t^X)=\frac{\int_{\RR^{d}}x\omega_{\alpha}^{\mc{E}}(x,\bar y(x))\rho^X(t,dx)}{\int_{\RR^{d}}\omega_{\alpha}^{\mc{E}}(x,\bar y(x))\rho^X(t,dx)}=\frac{\int_{\RR^{d}}x\omega_{\alpha}^{\overline{\mc{E}}}(x)\rho^X(t,dx)}{\int_{\RR^{d}}\omega_{\alpha}^{\overline{\mc{E}}}(x)\rho^X(t,dx)}\,.
	\end{equation}
Then we split the error
\begin{equation}\label{24}
		|x^*-X_{\alpha,\beta}(\rho_t^X)|\leq |x^*-\OX_{\alpha}(\rho_t^X)|+|\OX_{\alpha}(\rho_t^X)-X_{\alpha,\beta}(\rho_t^X)|\,.
\end{equation}
Similar to the proof of Proposition \ref{propY} it is easy to get
	\begin{align}\label{25}
	|x^*-\OX_{\alpha}(\rho_t^X)|\leq& \frac{(2q^*)^\nu}{\eta^*}+\frac{\exp\left(-\alpha q^*\right)}{ \rho_t^X (\{x:x\in B_r(x^*)\})} \int|x-x^*|\rho_t^X(dx)\,.
\end{align}
As for $|\OX_{\alpha}(\rho_t^X)-X_{\alpha,\beta}(\rho_t^X)|$ we compute
\begin{align}
	&|\OX_{\alpha}(\rho_t^X)-X_{\alpha,\beta}(\rho_t^X)|
        =\left|\frac{\int_{\RR^{d}}x\omega_{\alpha}^{\mc{E}}(x,\bar y(x))\rho^X(t,dx)}{\int_{\RR^{d}}\omega_{\alpha}^{\mc{E}}(x,\bar y(x))\rho^X(t,dx)}-\frac{\int_{\RR^{d}}x\omega_{\alpha}^{\mc{E}}(x,Y_\beta(\rho_t^Y,x))\rho^X(t,dx)}{\int_{\RR^{d}}\omega_{\alpha}^{\mc{E}}(x,Y_\beta(\rho_t^Y,x))\rho^X(t,dx)}\right|
 \nn\\
 &=
 \left|\frac{\int_{\RR^{d}}x\omega_{\alpha}^{\mc{E}}(x,\bar y(x))\rho^X(t,dx)\cdot 
 \int_{\RR^{d}}\big( \omega_{\alpha}^{\mc{E}}(x,Y_\beta(\rho_t^Y,x))-\omega_{\alpha}^{\mc{E}}(x,\bar y(x))\big)\rho^X(t,dx)}{\int_{\RR^{d}}\omega_{\alpha}^{\mc{E}}(x,\bar y(x))\rho^X(t,dx)\int_{\RR^{d}}\omega_{\alpha}^{\mc{E}}(x,Y_\beta(\rho_t^Y,x))\rho^X(t,dx)}\right.\nn\\
    &\quad\quad 
        \left. +\frac{\int_{\RR^{d}}x\big( \omega_{\alpha}^{\mc{E}}(x,\bar y(x))-     \omega_{\alpha}^{\mc{E}}(x,Y_\beta(\rho_t^Y,x))\big)\rho^X(t,dx)}
                                            {\int_{\RR^{d}}\omega_{\alpha}^{\mc{E}}(x,Y_\beta(\rho_t^Y,x))\rho^X(t,dx)}\right|
 \nn\\
	&\leq \alpha e^{2\alpha(\bar C_{\TE}-\underline C_{\TE})}\| x\rho_t^X\|_1\int_{\RR^{d}}|\TE(x,\bar y(x))-\TE(x,Y_\beta(\rho_t^Y,x))|\rho^X(t,dx)\nn\\
	&\quad 
            +\alpha e^{\alpha(\bar C_{\TE}-\underline C_{\TE})}\int_{\RR^{d}}|x||\TE(x,\bar y(x))-\TE(x,Y_\beta(\rho_t^Y,x))|\rho^X(t,dx)\nn\\
        &\leq \alpha e^{2\alpha(\bar C_{\TE}-\underline C_{\TE})}\| x\rho_t^X\|_1\int_{\RR^{d}}L_{\TE} (1+ |x|+|\bar y (0)|+ \bar{c}_1|x|) |\bxy-Y_\beta(\rho_t^Y,x)|\rho^X(t,dx)\nn\\
	&\quad 
            +\alpha e^{\alpha(\bar C_{\TE}-\underline C_{\TE})}\int_{\RR^{d}}|x|
            L_{\TE} (1+ |x|+|\bar y (0)|+ \bar{c}_1|x|) |\bxy-Y_\beta(\rho_t^Y,x)|
            \rho^X(t,dx)\nn\\
	&\leq C(\alpha,\bar C_{\TE},\underline C_{\TE},L_{\TE},\bar{c}_1,\EE[|\OX_t|^4])\left(\int_{\RR^{d}}|\bxy-Y_\beta(\rho_t^Y,x)|^2\rho^X(t,dx) \right)^{1/2}
	\,.
\end{align}

Applying It\^o formula to \eqref{eqX} and taking expectations on both sides, we have
\begin{align}
	\frac{d}{dt}\EE[|\OX_t|^4]
        &\leq -4\lambda\EE[|\OX_t|^{2}\OX_t\cdot(\OX_t-X_{\alpha,\beta}(\rho_t^X))]
        +6\sigma^2\EE[|\OX_t|^{2}|\OX_t-X_{\alpha,\beta}(\rho_t^X)|^2]\nn\\
	&\leq C (\EE[|\OX_t|^4]+\EE[|X_{\alpha,\beta}(\rho_t^X)|^4])\nn\\
	&\leq C(1+e^{4\alpha(\bar C_{\TE}-\underline C_{\TE})})\EE[|\OX_t|^4]\,,
\end{align}
where one has used the fact that $\EE[|X_{\alpha,\beta}(\rho_t^X)|^4]=|X_{\alpha,\beta}(\rho_t^X)|^4\leq e^{4\alpha(\bar C_{\TE}-\underline C_{\TE})}\EE[|\OX_t|^4]$. Applying Gr{\"o}nwall's inequality yields that
\begin{equation}
	\EE[|\OX_t|^4]\leq \EE[|\OX_0|^4]\exp(Ct)\,,
\end{equation}
where $C$ depends only on $\alpha, \bar C_{\TE}, \underline C_{\TE},\sigma$ and $\lambda$. Then we have
\begin{align}
	&|\OX_{\alpha}(\rho_t^X)-X_{\alpha,\beta}(\rho_t^X)|\nn\\
	\leq& C_1^\alpha\left(\int_{\RR^{d}}|\bxy-Y_\beta(\rho_t^Y,x)|^2\rho^X(t,dx)\right)^{1/2}
	\!\!,
\end{align}
$C_1^\alpha>0$ depends only on $\alpha, \bar C_{\TE}, \underline C_{\TE},L_{\TE},\bar{c}_1, \EE[|\OX_0|^4], T, \lambda$ and $\sigma$.
This together with \eqref{24}-\eqref{25} completes the proof.
\end{proof}
To eventually apply the above propositions, one needs to ensure that $\rho_t^X (\{x:x\in B_r(x^*)\})$  and $\rho_t^Y (\{y:y\in B_r(\bxy)\})$ are bounded away from $0$ for a finite time horizon $T$. To do so, we employ a rather technical argument inspired from \cite[Proposition 23]{fornasier2021consensus1}, and for any $|\bar y|\leq \bar c_2$ introduce the mollifier $\phi_r^{\bar y}\colon \R^{2d}\to\R$  for $r>0$ defined by
	\begin{align}\label{moli}
	\phi_{r}^{\bar y}(x,y):= \begin{cases}\prod_{k=1}^{d} \exp \left(1-\frac{r^{2}}{r^{2}-\left(x-x^{*}\right)_{k}^{2}}\right)\prod_{k=1}^{d} \exp \left(1-\frac{r^{2}}{r^{2}-\left(y-\bar y\right)_{k}^{2}}\right), & \text { if } (x,y)\in B_r(x^*)\otimes B_r(\bar y) \\ 0, & \text { else }.\end{cases}
	\end{align}
	We have $\mbox{Image}(\phi_r^{\bar y}) = [0, 1], \mbox{supp}(\phi_r^{\bar y}) = B_r(x^*)\otimes B_r(\bar y),  \phi_r^{\bar y}\in \mc{C}_c^\infty(\RR^{2d})$ and
	\begin{align}
	\begin{aligned}
	\partial_{x_k} \phi_{r}^{\bar y} &=-2 r^{2} \frac{\left(x-x^{*}\right)_{k}}{\left(r^{2}-\left(x-x^{*}\right)_{k}^{2}\right)^{2}} \phi_{r}^{\bar y}, \\
	\partial_{x_k x_k}^{2} \phi_{r}^{\bar y}&=2 r^{2}\left(\frac{2\left(2\left(x-x^{*}\right)_{k}^{2}-r^{2}\right)\left(x-x^{*}\right)_{k}^{2}-\left(r^{2}-\left(x-x^{*}\right)_{k}^{2}\right)^{2}}{\left(r^{2}-\left(x-x^{*}\right)_{k}^{2}\right)^{4}}\right) \phi_{r}^{\bar y}\,.
	\end{aligned}
	\end{align}
Similar formulas follow for the derivatives with respect to $y$.  Then, we get the following result, whose proof is postponed to the Appendix.

\begin{proposition}\label{propositive}
	Let $\phi_r^{\bar y}$ be the mollifier defined in \eqref{moli} and $(\OX_t,\OY_t)_{0\leq t\leq T}$ be the solution to \eqref{MVeq} up to any time $T>0$. Assume that $\sup_{t\in[0,T]}\EE[\left|x^{*}-X_{\alpha,\beta}(\rho_t^X)\right|+\left|y^{*}-Y_{\beta}(\rho_t^Y,\OX_t)\right|]\leq B$ for some $B>0$. Then there exists some constant $\vartheta>0$ depends only on $d,\lambda,\sigma,r,y^*,\bar c_2$ and $B$ such that
\begin{equation}\label{positive}
	\PP((\OX_t,\OY_t)\in  B_r(x^*)\otimes  B_r({\bar y}))\geq \EE[\phi_r^{\bar y}(\OX_t,\OY_t)]\exp(-\vartheta t)\,.
\end{equation}
\end{proposition}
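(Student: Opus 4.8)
The plan is to control the time derivative of $g(t):=\EE[\phi_r^{\bar y}(\OX_t,\OY_t)]$ and close a Grönwall-type differential inequality. Since $\phi_r^{\bar y}$ takes values in $[0,1]$ with $\mathrm{supp}\,\phi_r^{\bar y}=B_r(x^*)\otimes B_r(\bar y)$, one has $\PP((\OX_t,\OY_t)\in B_r(x^*)\otimes B_r(\bar y))\ge g(t)$ for every $t$; hence it suffices to establish the lower bound $g(t)\ge g(0)\exp(-\vartheta t)$, which together with the previous inequality yields $\PP((\OX_t,\OY_t)\in B_r(x^*)\otimes B_r(\bar y))\ge \EE[\phi_r^{\bar y}(\OX_0,\OY_0)]\exp(-\vartheta t)$, the desired exponential control of the mass near $(x^*,\bar y)$.

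First I would apply It\^o's formula to \eqref{MVeq}. Because the anisotropic diffusion $D(\cdot)$ is diagonal and the Brownian motions $B^X,B^Y$ are independent, no mixed second-order derivatives survive, and after taking expectations one gets $g'(t)=\EE[\mathcal L\phi_r^{\bar y}(\OX_t,\OY_t)]$ with
\[ \mathcal L\phi=\sum_{k=1}^d\Big(-\lambda\,\partial_{x_k}\phi\,(\OX_t-X_{\alpha,\beta}(\rho_t^X))_k+\tfrac{\sigma^2}{2}\,\partial^2_{x_kx_k}\phi\,(\OX_t-X_{\alpha,\beta}(\rho_t^X))_k^2\Big)+[x\leftrightarrow y], \]
where $[x\leftrightarrow y]$ denotes the analogous $y$-block built from the consensus point $Y_\beta(\rho_t^Y,\OX_t)$. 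The stochastic integral is a genuine martingale, so its expectation vanishes: $\nabla\phi_r^{\bar y}$ is bounded with compact support, and on that support the consensus offsets are bounded, making the diffusion integrand bounded.

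The heart of the argument is the pointwise estimate $\mathcal L\phi_r^{\bar y}\ge-\vartheta\,\phi_r^{\bar y}$. Since every derivative of $\phi_r^{\bar y}$ vanishes off $B_r(x^*)\otimes B_r(\bar y)$, I only need this on the support. Using the derivative formulas following \eqref{moli}, each $x$-summand factorizes as $\phi_r^{\bar y}$ times a scalar $F(z_k,a_k)$, where $z_k:=(x-x^*)_k\in(-r,r)$ and $a_k:=(X_{\alpha,\beta}(\rho_t^X)-x^*)_k$, namely $F(z,a)=2\lambda r^2\frac{z(z-a)}{(r^2-z^2)^2}+\frac{\sigma^2 r^2 N(z)}{(r^2-z^2)^4}(z-a)^2$ with $N(z)=2(2z^2-r^2)z^2-(r^2-z^2)^2$. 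The key point is that $F$ is bounded below uniformly. Near the boundary $|z|\to r$ one has $N(z)\to 2r^4>0$, so the second-order term blows up like $(r^2-z^2)^{-4}$ and dominates the first-order term of order $(r^2-z^2)^{-2}$; viewing $F$ as a quadratic in $u:=z-a$ with positive leading coefficient and completing the square gives $F\ge-\lambda^2/\sigma^2$ there, \emph{uniformly in the offset} $a$. On the complementary compact set $\{|z|\le\rho\}$, the denominators are bounded away from zero and, provided the offset stays in a bounded set, $F$ is continuous hence bounded below. Summing over $k$ and over the $x$- and $y$-blocks produces $\mathcal L\phi_r^{\bar y}\ge-\vartheta\phi_r^{\bar y}$, whence $g'(t)\ge-\vartheta g(t)$ and Grönwall's inequality closes the argument.

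The main obstacle is this boundary analysis, compounded by the fact that in the $y$-block the consensus point $Y_\beta(\rho_t^Y,\OX_t)$ is random. For the $x$-block the offset $x^*-X_{\alpha,\beta}(\rho_t^X)$ is a deterministic functional of the law and is bounded by $B$ via the hypothesis, so the compact-region estimate applies directly. For the $y$-block I must instead control the offset $\bar y-Y_\beta(\rho_t^Y,\OX_t)$; writing it as $(\bar y-y^*)+(y^*-Y_\beta)$ and using $|y^*-\bar y|\le 2\bar c_2$ reduces matters to $y^*-Y_\beta$. Here the near-boundary bound is already uniform in the offset, so only the region away from the boundary needs a bounded offset, and one exploits that $Y_\beta$ is a weighted average of $y$ against the bounded weights $\omega_{-\beta}^{\TE}$ — by Assumption \ref{assum2}-$(i)$ and the assumed moment regularity $\sup_{t\in[0,T]}\EE[|\OY_t|^4]<\infty$ of \eqref{MVeq}, $Y_\beta$ stays in a bounded set, so the same quadratic-in-offset estimate goes through. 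Collecting the constants yields $\vartheta>0$ depending only on the stated quantities, completing the proof.
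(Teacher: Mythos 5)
Your overall route coincides with the paper's: apply It\^o's formula to $\EE[\phi_r^{\bar y}(\OX_t,\OY_t)]$, bound the generator terms from below by $-\vartheta\,\phi_r^{\bar y}$ on the support, and close with Gr\"onwall. Your completing-the-square argument in the offset variable $u=z-a$ near the boundary is a clean repackaging of the paper's split into $K_{1,k}^x\cap K_{2,k}^x$ and $K_{1,k}^x\cap (K_{2,k}^x)^c$ (note only that the minimum of the quadratic is $-\lambda^2 r^2 z^2/(\sigma^2 N(z))$, so the constant depends on the threshold defining ``near the boundary'', as in the paper's $q_3^x=4\lambda^2/(\tilde c\sigma^2)$, not the bare $-\lambda^2/\sigma^2$). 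For the $x$-block the argument is sound, since $X_{\alpha,\beta}(\rho_t^X)$ is a deterministic functional of the law and the hypothesis gives the pointwise bound $|x^*-X_{\alpha,\beta}(\rho_t^X)|\le B$.

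The genuine gap is in the $y$-block away from the boundary. There you need control of the random offset $\bar y - Y_\beta(\rho_t^Y,\OX_t)$, and you obtain an almost-sure bound from the weight function: $|Y_\beta(\rho_t^Y,\OX_t)|\le e^{\beta(\overline C_{\TE}-\underline C_\TE)}\,\EE[|\OY_t|]$. This is true, but the constant depends exponentially on $\beta$ and on $\sup_{t\in[0,T]}\EE[|\OY_t|]$, so the $\vartheta$ you produce depends on $\beta$, $\overline C_{\TE}-\underline C_\TE$ and the moments of $\OY$ --- not only on $d,\lambda,\sigma,r,y^*,\bar c_2,B$ as the proposition asserts. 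This is not cosmetic: in the proof of Theorem \ref{theorem:main} the proposition is invoked with $B=4C_{\alpha,\beta}(0)$, and the argument explicitly requires $\vartheta$ to be independent of $\alpha,\beta$, because $\beta$ is subsequently chosen so large that $\exp(-\beta q_1)\exp(\vartheta T_\varepsilon)$ is small. Since the offset enters $\vartheta$ quadratically through the diffusion term (compare $q_2^x\sim B^2$ in the paper), your bound yields $\vartheta\gtrsim e^{2\beta(\overline C_{\TE}-\underline C_\TE)}$, and then $\exp(-\beta q_1)\exp(\vartheta T_\varepsilon)\to\infty$ as $\beta\to\infty$: the downstream convergence proof collapses. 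The paper instead keeps the $B$-dependence by estimating the $y$-terms in expectation, replacing \eqref{esforx} with $\EE[|(\OY_t-Y_\beta(\rho_t^Y,\OX_t))_k|]\le \sqrt c\, r+\bar c_2+|y^*|+B$ (see the remark closing the appendix proof), which uses the hypothesis $\sup_{t\in[0,T]}\EE[|y^*-Y_\beta(\rho_t^Y,\OX_t)|]\le B$ directly. You correctly sensed the real difficulty --- the $y$-consensus point is random, so the hypothesis gives only $L^1$ control while a pointwise generator bound wants almost-sure control --- but the repair must preserve the stated dependence on $B$; any fix routed through $e^{\beta(\overline C_{\TE}-\underline C_\TE)}$ proves a weaker statement than the one the main theorem needs.
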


\begin{thm} \label{theorem:main} Let $(\OX_t,\OY_t)_{t\geq 0}$ be a solution to the equation \eqref{MVeq}.
	Assume that $\TE$ satisfy Assumption \ref{assum2}, and $\lambda,\sigma$ satisfy $2\lambda>\sigma^2$ and $\bar{c}_1\leq \frac{1}{4}\min\left\{\frac{2\lambda-\sigma^2}{16(\lambda+\sigma^2)},\sqrt{\frac{2\lambda-\sigma^2}{16\sigma^2}}\right\}$. Furthermore assume the initial data satisfies 
\begin{equation*} 
 \rho_0^X (\{x:x\in B_{r_0^*}(x^*)\}), \inf_{  \{|\bar y|\leq \bar c_2\}} \rho_0^Y ({y:y\in B_{r_0}(\bar y)}), \inf_{  \{|\bar y|\leq \bar c_2\}} \EE[\phi_{ r_1}^{\bar y}(\OX_0,\OY_0)], \inf_{  \{|\bar y|\leq \bar c_2\}} \EE[\phi_{ r_1^*}^{\bar y}(\OX_0,\OY_0)]>0\quad (*)
 \end{equation*}
 for some $r_0,r_0^*,r_1,r_1^*>0$ to be determined later, and $\mc{V}(0)\geq 2\varepsilon$ for any prescribed accuracy $\varepsilon>0$. Then for $\alpha,\beta$ sufficiently large, there exists some $0<T_*\leq T_\varepsilon$ such that
\begin{align}
	\mc{V}(t)\leq \mc{V}(0)\exp\left(-\frac{2\lambda-\sigma^2}{2}t\right)\mbox{ for }t\in[0,T_\ast)\,,
\end{align}
where $T_\varepsilon:=\frac{2}{2\lambda-\sigma^2}\log(\frac{\mc{V}(0)}{\varepsilon})$. It reaches the prescribed accuracy at time $T_*$, namely $\mc{V}(T_\ast)=\varepsilon$.
\end{thm}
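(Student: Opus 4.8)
The plan is to convert the differential inequality of Lemma~\ref{lemV} into the closed bound $\frac{d}{dt}\mc V(t)\le-\frac12(2\lambda-\sigma^2)\mc V(t)$ on the time interval where $\mc V(t)>\varepsilon$, and then close the argument by a Gr\"onwall estimate together with a continuity argument. I would first define the hitting time $T_*:=\sup\{t\ge0:\mc V(s)>\varepsilon\ \text{for all}\ s\in[0,t)\}$, which is strictly positive because $\mc V(0)\ge2\varepsilon$ and $\mc V$ is continuous. On $[0,\min\{T_*,T_\varepsilon\})$ one has $\mc V(t)>\varepsilon$, hence $\mc V(t)^{1/2}>\sqrt\varepsilon$; this lower bound is the device that converts additive constant errors into multiples of $\mc V(t)^{1/2}$. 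Writing $E_X(t):=|x^*-X_{\alpha,\beta}(\rho_t^X)|$ and $E_Y(t):=\EE[|y^*-Y_\beta(\rho_t^Y,\OX_t)|]$, Lemma~\ref{lemV} shows it suffices to prove $E_X(t)+E_Y(t)\le\kappa\,\mc V(t)^{1/2}$ for a constant $\kappa$ small enough that $2(\lambda+\sigma^2)\kappa+\sigma^2\kappa^2\le\frac12(2\lambda-\sigma^2)$.

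The core step is to obtain this bound from the quantitative Laplace estimates. Using Assumption~\ref{assum2}(iii) I split $E_Y\le\bar c_1\,(\mc V^X(t))^{1/2}+\EE[|\bar y(\OX_t)-Y_\beta(\rho_t^Y,\OX_t)|]$, where the first summand already carries the prefactor $\bar c_1$ that the hypothesis keeps small, while the second is bounded uniformly in the base point by Proposition~\ref{propY} (with $q\le\TE_\infty/2$). Proposition~\ref{propX} then controls $E_X$ by the analogous direct error plus the coupling term $C_1^\alpha\big(\int|\bar y(x)-Y_\beta(\rho_t^Y,x)|^2\rho^X(t,dx)\big)^{1/2}$, which is again estimated through Proposition~\ref{propY}. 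Collecting terms gives $E_X+E_Y\le\bar c_1\,\mc V(t)^{1/2}+\delta$, where $\delta$ gathers the irreducible Laplace errors $\tfrac{(2q)^\nu}{\eta},\tfrac{(2q^*)^\nu}{\eta^*}$ together with the exponentially small quantities $e^{-\beta q}$ and $e^{-\alpha q^*}$. Using $\mc V(t)>\varepsilon$ I rewrite $\delta\le\frac{\delta}{\sqrt\varepsilon}\,\mc V(t)^{1/2}$, so that $\kappa=\bar c_1+\delta/\sqrt\varepsilon$; the imposed smallness of $\bar c_1$ guarantees $2(\lambda+\sigma^2)\bar c_1+\sigma^2\bar c_1^2<\frac14(2\lambda-\sigma^2)$, leaving exactly enough slack to absorb the remaining $\delta/\sqrt\varepsilon$ once $\delta$ is made small.

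For all of this to be legitimate, the denominators $\rho_t^Y(\{y\in B_r(\bar y(x))\})$ and $\rho_t^X(\{x\in B_r(x^*)\})$ entering Propositions~\ref{propY}--\ref{propX} must remain bounded away from $0$ over the whole horizon. This is precisely the content of Proposition~\ref{propositive}: marginalizing $\PP((\OX_t,\OY_t)\in B_r(x^*)\otimes B_r(\bar y))\ge\EE[\phi_r^{\bar y}(\OX_0,\OY_0)]e^{-\vartheta t}$ and taking the infimum over $|\bar y|\le\bar c_2$ produces, via the positivity hypothesis $(*)$ on the initial data and the finiteness of $\vartheta T_\varepsilon$, strictly positive lower bounds on $[0,T_\varepsilon]$, with the radii $r_0,r_0^*,r_1,r_1^*$ now fixed by the choices of $q,q^*$. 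The main obstacle---and the most delicate bookkeeping---is the order of the parameter choices: since $C_1^\alpha$ grows with $\alpha$, one must first fix $q^*$ (hence the $X$-radius and its lower bound), then take $\alpha$ large enough to kill $e^{-\alpha q^*}$, only afterwards choose $q$ small relative to the now-frozen $C_1^\alpha$, and finally take $\beta$ large so that both $e^{-\beta q}$ and $C_1^\alpha e^{-\beta q}$ are negligible. One also has to supply the a priori bound $B$ required by Proposition~\ref{propositive}, which is available from the assumed fourth-moment regularity of the mean-field solution.

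I would conclude by integrating. On $[0,\min\{T_*,T_\varepsilon\})$ the inequality $\frac{d}{dt}\mc V\le-\frac12(2\lambda-\sigma^2)\mc V$ yields $\mc V(t)\le\mc V(0)\exp(-\frac{2\lambda-\sigma^2}{2}t)$. Were $T_*>T_\varepsilon$, evaluating at $t=T_\varepsilon$ would give $\mc V(T_\varepsilon)\le\mc V(0)\cdot\frac{\varepsilon}{\mc V(0)}=\varepsilon$, contradicting $\mc V>\varepsilon$ on $[0,T_*)$; hence $T_*\le T_\varepsilon$, the decay estimate holds on all of $[0,T_*)$, and continuity of $\mc V$ forces $\mc V(T_*)=\varepsilon$, which is the claimed statement.
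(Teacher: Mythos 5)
Your overall architecture (quantitative Laplace estimates, mollifier lower bound on the denominators, Gr\"onwall, continuity argument) is the same as the paper's, but there is one genuine gap, and it sits exactly at the point you flag as "supplying the a priori bound $B$": you cannot get the bound $B$ required by Proposition \ref{propositive} from the fourth-moment regularity of the mean-field solution. From moments alone, the only available control on the consensus points is of the form $|X_{\alpha,\beta}(\rho_t^X)|\leq e^{\alpha(\overline C_\TE-\underline C_\TE)}\,\EE[|\OX_t|]$ and $\EE[|Y_\beta(\rho_t^Y,\OX_t)|]\leq e^{\beta(\overline C_\TE-\underline C_\TE)}\,\EE[|\OY_t|]$, since the exponential weights can concentrate all mass on far-out points. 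Hence your $B$ grows exponentially in $\alpha$ and $\beta$, and the constant $\vartheta$ of Proposition \ref{propositive} (which, from its proof, scales like $B^2/r^2$) blows up with $\alpha,\beta$ as well. Your final parameter selection then collapses: you need quantities like $e^{-\alpha q^*}e^{\vartheta T_\varepsilon}$ and $C_1^\alpha e^{-\beta q}e^{\vartheta T_\varepsilon}$ to be small, but with $\vartheta\gtrsim e^{2\alpha(\overline C_\TE-\underline C_\TE)}$ the factor $e^{\vartheta T_\varepsilon}$ grows doubly exponentially in $\alpha$ and overwhelms $e^{-\alpha q^*}$; the same happens in $\beta$. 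So the denominators cannot be kept away from zero along your route, and the step "$E_X+E_Y\leq\kappa\,\mc V(t)^{1/2}$" is never established.

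The paper breaks this circularity with a bootstrap built into the stopping time, which your hitting time (defined solely through $\mc V$) does not provide. It sets $T_{\alpha,\beta}:=\inf\{t\geq 0:\ \mc V(t)\leq\varepsilon\ \text{or}\ \EE[|x^*-X_{\alpha,\beta}(\rho_t^X)|+|y^*-Y_\beta(\rho_t^Y,\OX_t)|]\geq 4C_{\alpha,\beta}(0)\}$ with $C_{\alpha,\beta}(0)=c_3\,\mc V(0)^{1/2}$ \emph{independent} of $\alpha,\beta$. On $[0,T_{\alpha,\beta})$ one may legitimately take $B=4C_{\alpha,\beta}(0)$ in Proposition \ref{propositive}, so $\vartheta$ is $\alpha,\beta$-independent and the choices "$\alpha$ large, then $\beta$ large" close without circularity. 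The Laplace estimates then improve the a priori bound from $4C_{\alpha,\beta}(0)$ to $2C_{\alpha,\beta}(t)\leq 2C_{\alpha,\beta}(0)$ on the whole stopping interval, which shows the second exit criterion can never be triggered, so the exit happens only through $\mc V$ hitting $\varepsilon$; a separate check at $t=0$ (where Propositions \ref{propY} and \ref{propX} are applied with the initial-data condition $(*)$ rather than with the mollifier bound) guarantees $T_{\alpha,\beta}>0$. The remainder of your argument --- the splitting of $E_Y$ via Assumption \ref{assum2}(iii), the absorption of the constant Laplace errors into $\mc V(t)^{1/2}$ using $\mc V(t)>\varepsilon$, the ordering $q^*\to\alpha\to q\to\beta$ of the parameter choices, and the concluding contradiction showing $T_*\leq T_\varepsilon$ --- is sound and mirrors the paper; only the source of $B$ must be replaced by this bootstrap device.
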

\begin{rmk}
It is obvious to see that the initial preparation $(*)$ may be satisfied for any $r_0,r_0^*,r_1,r_1^*>0$ if the density function of $(X_0,Y_0)$ is continuous and has support in the whole space $\mathbb R^{2d}$. This may be easily satisfied when generating initial samples of  $(X_0,Y_0)$, for instance, with Gaussian distributions.
\end{rmk}
	\begin{proof}[Proof of Theorem \ref{theorem:main}]
		Set
		\begin{equation}
			T_{\alpha,\beta}=\inf\left\{t\geq0\,:\,  \mc{V}(t)\leq \varepsilon \mbox{ or } \EE[|x^*-X_{\alpha,\beta}(\rho_t^X)|+|y^*-Y_\alpha(\rho_t^Y,\OX_t)|]\geq 4C_{\alpha,\beta}(0) \right\}\,,
		\end{equation}
	with 
	\begin{equation}
		C_{\alpha,\beta}(t):=c_3(\mc{V}(t))^\frac{1}{2} \mbox{ and } c_3:=\min\left\{\frac{2\lambda-\sigma^2}{16 
  (\lambda+\sigma^2)},\sqrt{\frac{2\lambda-\sigma^2}{16\sigma^2}}\right\}.
	\end{equation}
Firstly it follows from Proposition \ref{propY} that there exists some $r_0$ such that
{\small 	\begin{align}\label{eq47}
		&\EE[|\bxyn-Y_\beta(\rho_0^Y,\OX_0)|]\leq \frac{(2q_0)^\nu}{\eta}+\frac{\exp\left(-\beta q_0\right)}{\inf_{  \{|\bar y|\leq \bar c_2\}} \rho_0^Y ({y:y\in B_{r_0}(\bar y)})}\int \EE[|y-\bxyn|^2]^{1/2}\rho_0^Y(dy)\nn\\
		\leq& \frac{(2q_0)^\nu}{\eta}+\frac{\exp\left(-\beta q_0\right)}{\inf_{  \{|\bar y|\leq \bar c_2\}} \rho_0^Y ({y:y\in B_{r_0}(\bar y)})}\sqrt{2} \sqrt{\mc{V}^Y(0)}+\frac{\exp\left(-\beta q_0\right)}{\inf_{  \{|\bar y|\leq \bar c_2\}} \rho_0^Y ({y:y\in B_{r_0}(\bar y)})}\sqrt{2}\bar c_1 (\mc{V}^X(0))^{\frac{1}{2}}\nn\\
		\leq& \frac{(2q_0)^\nu}{\eta}+\frac{\exp\left(-\beta q_0\right)}{\inf_{  \{|\bar y|\leq \bar c_2\}} \rho_0^Y ({y:y\in B_{r_0}(\bar y)})}2(1+\bar c_1) (\mc{V}(0))^{\frac{1}{2}}
		\,,
\end{align}}
where in the second inequality we have used the fact that
\begin{align}
|y-\bxyn|^2\leq 2|y-y^*|^2+ 2|y^*-\bxyn|^2
	=2|y-y^*|^2+ 2|\bar y(x^*)-\bxyn|^2
	\leq 2|y-y^*|^2+  2\bar{c}_1^2 |\OX_0-x^*|^2\,.
\end{align}
Here one has constructed that
\begin{equation}\label{264}
	q_0:=\frac{1}{2}\min\left\{(\frac{\eta C_{\alpha,\beta}(0)}{2})^{1/\nu},(\frac{\eta C_{\alpha,\beta}(0)}{C_1^\alpha 4\sqrt{2}})^{1/\nu},\TE_\infty\right\}
\end{equation}
and
\begin{equation}
	r_0=\max\left\{s\in[0,R_0]: \sup_{x\in\RR^d}\max_{y\in B_s(\bar y (x))} |\TE(x,y)-\TE(x,\bar y(x))|\leq q_0\right\}\,.
\end{equation}
By construction, these choices satisfy $r_0\leq R_0$ and $\TE_{r_0}(\OX_0)\leq \frac{\TE_\infty}{2}$ a.s.
Then it holds that
\begin{align}
	&\EE[|y^*-Y_\beta(\rho_0^Y,\OX_0)|]\leq \EE[|y^*-\bxyn|]+\EE[ |\bxyn-Y_\beta(\rho_0^Y,\OX_0)|]\nn\\
	\leq &\frac{c_3}{4}(\mc{V}^X(0))^\frac{1}{2}+\frac{C_{\alpha,\beta}(0)}{2}+\frac{\exp\left(-\beta q_0\right)}{\inf_{  \{|\bar y|\leq \bar c_2\}} \rho_0^Y ({y:y\in B_{r_0}(\bar y)})}2(1+\bar{c}_1) (\mc{V}(0))^{\frac{1}{2}}\nn\\
 \leq& \frac{3C_{\alpha,\beta}(0)}{4}+\frac{\exp\left(-\beta q_0\right)}{\inf_{  \{|\bar y|\leq \bar c_2\}} \rho_0^Y ({y:y\in B_{r_0}(\bar y)})}2(1+\bar{c}_1) (\mc{V}(0))^{\frac{1}{2}}
 \,,
\end{align}
where we have used the fact that $\bar{c}_1\leq \frac{c_3}{4}$. From the definition of $q_0$ in \eqref{264} it follows that $q_0$ is independent of $\beta$ because $C_{\alpha,\beta}(0)$ is independent of $\alpha,\beta$.
 Then one can choose $\beta$ sufficiently large such that
\begin{equation}
\frac{\exp\left(-\beta q_0\right)}{\inf_{  \{|\bar y|\leq \bar c_2\}} \rho_0^Y ({y:y\in B_{r_0}(\bar y)})}2(1+\bar{c}_1)\leq \frac{c_3}{4}\,,
\end{equation}
which leads to
one has
\begin{equation}
   \EE[|y^*-Y_\beta(\rho_0^Y,\OX_0)|]\leq \frac{3C_{\alpha,\beta}(0)}{4}+\frac{C_{\alpha,\beta}(0)}{4}= C_{\alpha,\beta}(0).
\end{equation}

We further notice that
{\small 	\begin{align}\label{eq47'}
		&\EE[|\bxyn-Y_\beta(\rho_0^Y,\OX_0)|^2]\leq 2 [\frac{(2q_0)^\nu}{\eta}]^2+2[\frac{\exp\left(-\beta q_0\right)}{\inf_{  \{|\bar y|\leq \bar c_2\}} \rho_0^Y ({y:y\in B_{r_0}(\bar y)})}]^2\int \EE[|y-\bxyn|^2]\rho_0^Y(dy)\nn\\
		\leq& 2[\frac{(2q_0)^\nu}{\eta}]^2+2[\frac{\exp\left(-\beta q_0\right)}{\inf_{  \{|\bar y|\leq \bar c_2\}} \rho_0^Y ({y:y\in B_{r_0}(\bar y)})}]^2 \left(2 \mc V^Y(0)+2\bar c_1^2\mc V^X(0)\right)\nn\\
		\leq& 2[\frac{(2q_0)^\nu}{\eta}]^2+2[\frac{\exp\left(-\beta q_0\right)}{\inf_{  \{|\bar y|\leq \bar c_2\}} \rho_0^Y ({y:y\in B_{r_0}(\bar y)})}]^22(1+\bar c_1^2) \mc V(0)
		\,,
\end{align}}
Similarly it follows from Proposition \ref{propX} that there exists some $r_0^*$ such that
	\begin{align}
	|x^*-X_{\alpha,\beta}(\rho_0^X)|\nn
	\leq&\frac{(2q_0^*)^\nu}{\eta^*}+\frac{\exp\left(-\alpha q_0^*\right)}{ \rho_0^X (\{x:x\in B_{r_0^*}(x^*)\})} \int|x-x^*|\rho_0^X(dx)\nn\\
	&+C_1^\alpha \left(\int_{\RR^{d}}|\bxy-Y_\beta(\rho_0^Y,x)|^2\rho_0^X(dx)\right)^\frac{1}{2}\nn\\
	\leq& \frac{(2q_0^*)^\nu}{\eta^*}+\frac{\exp\left(-\alpha q_0^*\right)}{ \rho_0^X (\{x:x\in B_{r_0^*}(x^*)\})} (\mc{V}^X(0))^{\frac{1}{2}}\nn\\
	&+C_1^\alpha \left(\sqrt 2\frac{(2q_0)^\nu}{\eta}+\frac{\exp\left(-\beta q_0\right)}{\inf_{  \{|\bar y|\leq \bar c_2\}} \rho_0^Y ({y:y\in B_{r_0}(\bar y)})}2(1+\bar{c}_1) (\mc{V}(0))^{\frac{1}{2}}\right)
	\,,
\end{align}
where in the second inequality we have used \eqref{eq47'}.
Here one has constructed that
\begin{equation}
	q_0^*:=\frac{1}{2}\min\left\{(\frac{\eta^* C_{\alpha,\beta}(0)}{4})^{1/\nu},\TE_\infty^*\right\}
\end{equation}
and
\begin{equation}
	r_0^*=\max\left\{s\in[0,R_0]: \sup_{x\in B_r(x^*)}\left|\overline \TE(x)-\overline \TE(x^*)\right|\leq q_0^*\right\}\,.
\end{equation}
By construction, these choices satisfy $r_0^*\leq R_0$ and $\TE_{r_0^*}\leq \frac{\TE_\infty^*}{2}$.
Then it holds that
\begin{align}
	&|x^*-X_{\alpha,\beta}(\rho_0^X)|\leq \frac{C_{\alpha,\beta}(0)}{4}+\frac{\exp\left(-\alpha q_0^*\right)}{ \rho_0^X (\{x:x\in B_{r_0^*}(x^*)\})} (\mc{V}(0))^{\frac{1}{2}}+\frac{C_{\alpha,\beta}(0)}{4}\nn\\
	&+C_1^\alpha  \frac{\exp\left(-\beta q_0\right)}{\inf_{  \{|\bar y|\leq \bar c_2\}} \rho_0^Y ({y:y\in B_{r_0}(\bar y)})}2(1+\bar{c}_1) (\mc{V}(0))^{\frac{1}{2}}
	\,,
\end{align}
where we have used \eqref{264}. Now one can choose $\alpha$ sufficiently large such that
\begin{equation}
	\frac{\exp\left(-\alpha q_0^*\right)}{ \rho_0^X (\{x:x\in B_{r_0^*}(x^*)\})}\leq \frac{c_3}{4}
\end{equation}
and then $\beta$ sufficiently large such that
\begin{equation}
	C_1^\alpha \frac{\exp\left(-\beta q_0\right)}{\inf_{  \{|\bar y|\leq \bar c_2\}} \rho_0^Y ({y:y\in B_{r_0}(\bar y)})}2(1+\bar{c}_1)\leq \frac{c_3}{4}.
\end{equation}
This yields that
\begin{equation}
	|x^*-X_{\alpha,\beta}(\rho_0^X)|\leq C_{\alpha,\beta}(0).
\end{equation}
So we have
\begin{equation}
	\EE[|x^*-X_{\alpha,\beta}(\rho_0^X)|+|y^*-Y_\alpha(\rho_0^Y,\OX_0)|]\leq 2C_{\alpha,\beta}(0)\,.
\end{equation}
This together with the assumption that $\mc{V}(0)\geq 2\varepsilon$ implies $T_{\alpha,\beta}>0$ .

According to the definition of $T_{\alpha,\beta}$ one has $\mc{V}(t)>\varepsilon$  and  $\EE[|x^*-X_{\alpha,\beta}(\rho_t^X)|+|y^*-Y_\alpha(\rho_t^Y,\OX_t)|]< 4C_{\alpha,\beta}(0)$ for all $t\in [0,T_{\alpha,\beta})$, and at $t=T_{\alpha,\beta}$ it holds that $\mc{V}(T_{\alpha,\beta})=\varepsilon$ or  $\EE[|x^*-X_{\alpha,\beta}(\rho_t^X)|+|y^*-Y_\alpha(\rho_t^Y,\OX_t)|]=4C_{\alpha,\beta}(0)$. Next we will prove that $T_{\alpha,\beta}\leq T_\varepsilon$ and $\mc{V}(t)$ decreases exponentially on $[0,T_{\alpha,\beta})$.

\textbf{Case }$T_{\alpha,\beta}\leq T_\varepsilon$: Similar to \eqref{eq47} there exists some $r_1$ such that
\begin{align}
	&\EE[|\bxynt-Y_\beta(\rho_t^Y,\OX_t)|]\nn\\
\leq& \frac{(2q_1)^\nu}{\eta}+\frac{\exp\left(-\beta q_1\right)}{\inf_{  \{|\bar y|\leq \bar c_2\}} \rho_t^Y ({y:y\in B_{r_1}(\bar y)})}2(1+\bar{c}_1) (\mc{V}(t))^{\frac{1}{2}}
\,,
	\,,
\end{align}
Here one has constructed that
\begin{equation}\label{eqq1}
    q_1:=\frac{1}{2}\min\left\{(\frac{\eta c_3\sqrt{\varepsilon}}{2})^{1/\nu},(\frac{\eta c_3\sqrt{\varepsilon}}{C_1^\alpha4\sqrt{2}})^{1/\nu},\TE_\infty\right\}
\end{equation}
and
\begin{equation}
	r_1=\max\left\{s\in[0,R_0]: \sup_{x\in\RR^d}\max_{y\in B_s(\bar y(x))} |\TE(x,y)-\TE(x,\bar y(x))|\leq q_1\right\}\,.
\end{equation}
Then it holds that
\begin{align}
	\EE[|y^*-Y_\beta(\rho_t^Y,\OX_t)|]\leq  \frac{3C_{\alpha,\beta}(t)}{4}+\frac{\exp\left(-\beta q_1\right)}{\inf_{  \{|\bar y|\leq \bar c_2\}} \rho_t^Y ({y:y\in B_{r_1}(\bar y)})}2(1+\bar{c}_1) (\mc{V}(t))^{\frac{1}{2}}
	\,,
\end{align}
Moreover $\inf_{  \{|\bar y|\leq \bar c_2\}}  \rho_t^Y (\{y:y\in B_{r_1}(\bar y)\})=\inf_{  \{|\bar y|\leq \bar c_2\}}  \PP(\OY_t\in B_{r_1}(\bar y))\geq \inf_{  \{|\bar y|\leq \bar c_2\}}  \EE[\phi_{r_1}^{\bar y}(\OX_0,\OY_0)]\exp(-\vartheta t)$ holds, where $\vartheta$ depends only on $d,\lambda,\sigma,y^*,\bar c_2$ and $B=4C_{\alpha,\beta}(0)$.  
Since  $C_{\alpha,\beta}(0)$ is actually independent of $\alpha,\beta$, we know $\vartheta$ is independent of $\alpha,\beta$.
This concludes that for all $t\in[0,T_{\alpha,\beta})$
\begin{align}\label{eq59}
		&\EE[|y^*-Y_\beta(\rho_t^Y,\OX_t)|]\leq  \frac{3C_{\alpha,\beta}(t)}{4}+\frac{\exp\left(-\beta q_1\right)\exp(\vartheta T_\varepsilon)}{\inf_{  \{|\bar y|\leq \bar c_2\}}  \EE[\phi_{r_1}^{\bar y}(\OX_0,\OY_0)]}2(1+\bar{c}_1) (\mc{V}(t))^{\frac{1}{2}}\leq C_{\alpha,\beta}(t)
		\,,
\end{align}
where we choose $\beta$ sufficiently large.

Similarly  there exists some $r_1^*$ such that
\begin{align}
	&|x^*-X_{\alpha,\beta}(\rho_t^X)|\nn\\
	\leq& \frac{(2q_1^*)^\nu}{\eta^*}+\frac{\exp\left(-\alpha q_1^*\right)}{ \rho_t^X ({x:x\in B_{r_1^*}(x^*)})} (\mc{V}^X(t))^{\frac{1}{2}}\nn\\
	&+C_1^\alpha \left(\sqrt 2\frac{(2q_1)^\nu}{\eta}+\frac{\exp\left(-\beta q_1\right)}{\inf_{  \{|\bar y|\leq \bar c_2\}} \rho_t^Y ({y:y\in B_{r_1}(\bar y)})}2(1+\bar{c}_1) (\mc{V}(t))^{\frac{1}{2}}\right)
	\,.
\end{align}
Here one has constructed that
\begin{equation}
	q_1^*:=\frac{1}{2}\min\left\{(\frac{\eta^* c_3\sqrt{\varepsilon}}{4})^{1/\nu},\TE_\infty^*\right\}
\end{equation}
and
\begin{equation}
	r_1^*=\max\left\{s\in[0,R_0]: \sup_{x\in B_r(x^*)}\left|\overline \TE(x)-\overline \TE(x^*)\right|\leq q_1^*\right\}\,.
\end{equation}
By construction, these choices satisfy $r_1^*\leq R_0$ and $\TE_{r_1^*}\leq \frac{\TE_\infty^*}{2}$.
Then it holds that
\begin{align}
	&|x^*-X_{\alpha,\beta}(\rho_t^X)|\leq \frac{C_{\alpha,\beta}(t)}{4}+\frac{\exp\left(-\alpha q_1^*\right)}{ \rho_t^X ({x:x\in B_{r_1^*}(x^*)})} (\mc{V}(t))^{\frac{1}{2}}+\frac{C_{\alpha,\beta}(t)}{4}\nn\\
	&+C_1^\alpha\frac{\exp\left(-\beta q_1\right)}{\inf_{  \{|\bar y|\leq \bar c_2\}} \rho_t^Y ({y:y\in B_{r_1}(\bar y)})}2(1+\bar{c}_1) (\mc{V}(t))^{\frac{1}{2}}
	\,,
\end{align}
where we have used \eqref{eqq1}. Moreover one has
 $\rho_t^X ({x:x\in B_{r_1^*}(x^*)})=\PP(\OX_t\in B_{r_1^*}(x^*))\geq \EE[\phi_{ r_1^*}^{\bar y}(\OX_0,\OY_0)]\exp(-\vartheta t)$
and
$\inf_{  \{|\bar y|\leq \bar c_2\}}  \rho_t^Y (\{y:y\in B_{r_1}(\bar y)\})\geq \inf_{  \{|\bar y|\leq \bar c_2\}}  \EE[\phi_{r_1}^{\bar y}(\OX_0,\OY_0)]\exp(-\vartheta t)$ according to \eqref{positive}, where $\vartheta$ depends only on $d,\lambda,\sigma,y^*,\bar c_2$ and $B=4C_{\alpha,\beta}(0)$.  
Since  $C_{\alpha,\beta}(0)$ is actually independent of $\alpha,\beta$, we know $\vartheta$ is independent of $\alpha,\beta$.
This concludes that for all $t\in[0,T_{\alpha,\beta})$
\begin{align}
	&|x^*-X_{\alpha,\beta}(\rho_t^X)|\leq \frac{C_{\alpha,\beta}(t)}{4}+\frac{\exp\left(-\alpha q_1^*\right)\exp(\vartheta T_\varepsilon)}{ \EE[\phi_{ r_1^*}^{\bar y}(\OX_0,\OY_0)]} (\mc{V}(t))^{\frac{1}{2}}+\frac{C_{\alpha,\beta}(t)}{4}\nn\\
	&+C_1^\alpha\frac{\exp\left(-\beta q_0\right)\exp(\vartheta T_\varepsilon)}{\inf_{  \{|\bar y|\leq \bar c_2\}}  \EE[\phi_{ r_1}^{\bar y}(\OX_0,\OY_0)]}2(1+\bar{c}_1) (\mc{V}(t))^{\frac{1}{2}}
	\,.
\end{align}
 Now one can choose $\alpha$ sufficiently large such that
\begin{equation}
	\frac{\exp\left(-\alpha q_1^*\right)\exp(\vartheta T_\varepsilon)}{ \EE[\phi_{ r_1^*}^{\bar y}(\OX_0,\OY_0)]}\leq \frac{c_3}{4}
\end{equation}
and then $\beta$ sufficiently large such that
\begin{equation}
	C_1^\alpha\frac{\exp\left(-\beta q_0\right)\exp(\vartheta T_\varepsilon)}{\inf_{  \{|\bar y|\leq \bar c_2\}}  \EE[\phi_{r_1}^{\bar y}(\OX_0,\OY_0)]} 2(1+\bar{c}_1) \leq \frac{c_3}{4}.
\end{equation}
This yields that
\begin{equation}
	|x^*-X_{\alpha,\beta}(\rho_0^X)|\leq C_{\alpha,\beta}(t).
\end{equation}
So we have
\begin{equation}\label{290}
	\EE[|x^*-X_{\alpha,\beta}(\rho_t^X)|+|y^*-Y_\alpha(\rho_t^Y,\OX_t)|]\leq 2C_{\alpha,\beta}(t).
\end{equation}

		Let us recall the upper bound for the time derivative of $\mc{V}(t)$ given in Lemma \ref{lemV}:
\begin{align}
	\frac{d \mc{V}(t)}{dt}&\leq -(2\lambda-\sigma^2)\mc{V}(t)+2\sqrt{2}(\lambda+\sigma^2)(\mc{V}(t))^{\frac{1}{2}}(|x^*-X_{\alpha,\beta}(\rho_t^X)|+\EE[|y^*-Y_\beta(\rho_t^Y,\OX_t)|])\nn\\
	&\quad +\sigma^2 (|x^*-X_{\alpha,\beta}(\rho_t^X)|^2+\EE[|y^*-Y_\beta(\rho_t^Y,\OX_t)|^2])\,.
\end{align}
Then from \eqref{290} one can deduce that
\begin{equation}
		\frac{d \mc{V}(t)}{dt}\leq-\frac{2\lambda-\sigma^2}{2}\mc{V}(t)\,,
\end{equation}
which using Gr{\"o}nwall's inequality leads to
\begin{equation}
\mc{V}(t)\leq \mc{V}(0)\exp\left(-\frac{2\lambda-\sigma^2}{2}t\right)\mbox{ for }t\in[0,T_{\alpha,\beta})\,.
\end{equation}
This implies
\begin{equation}
\EE[|x^*-X_{\alpha,\beta}(\rho_{T_{\alpha,\beta}}^X)|+|y^*-Y_\alpha(\rho_{T_{\alpha,\beta}}^Y,\OX_{T_{\alpha,\beta}})|]\leq 2C_{\alpha,\beta}({T_{\alpha,\beta}})=2c_3(\mc{V}(T_{\alpha,\beta}))^\frac{1}{2}\leq  2C_\alpha(0)\,,
\end{equation}
Then according to the definition of $T_{\alpha,\beta}$ we must have $\mc{V}(T_{\alpha,\beta})\leq \varepsilon$.
	
\textbf{Case} $T_\varepsilon<T_{\alpha,\beta}$: By the definition of $T_{\alpha,\beta}$ we know that $\mc{V}(t)>\varepsilon$ and $\EE[|x^*-X_{\alpha,\beta}(\rho_t^X)|+|y^*-Y_\alpha(\rho_t^Y,\OX_t)|]< 4C_\alpha(0)$ for all $t\in[0,T_\varepsilon]$. Then following the same arguments as the in the first case, one concludes 
\begin{align}
		\mc{V}(t)\leq \mc{V}(0)\exp\left(-\frac{2\lambda-\sigma^2}{2}t\right)\mbox{ for }t\in[0,T_\varepsilon].
\end{align}
Using this estimate, the fact that $T_\varepsilon=\frac{2}{2\lambda-\sigma^2}\log(\frac{\mc{V}(0)}{\varepsilon})$ implies $	\mc{V}(T_\varepsilon)= \varepsilon$, which is a contradiction. Thus this case can never happen.
\end{proof}

\section{Numerical tests}
\label{sec:numerics}

We now validate the performance of the proposed CBO dynamics \eqref{particle} against nonconvex-nonconcave benchmark problems. We are particularly interested in numerically verifying if the convergence guarantees obtained for the mean-field model \eqref{MVeq} still holds when a finite number $N$ of particles is used. Also, we investigate how the different algorithm's parameters influence its convergence properties.

To obtain the actual numerical algorithm, we discretize the SDE particle system via explicit Euler--Maruyama scheme \cite{euler_scheme_1} which has an accuracy of order $1/2$. Let $\Delta t \in (0,1)$ be the stepsize, and the index $k$ denote the time step. Starting from a collection of $2N$ particles $\{(X_{(0)}^{i},Y_{(0)}^{i})\}_{i\in [N]}\subset \R^{d}\times \R^{d}$ the Euler--Maruyama discretization to \eqref{particle} reads
\begin{subequations}\label{discrete_particle}
	\begin{numcases}{}
		X_{(k+1)}^i= X_{(k)}^i -\lambda\Delta t(X_{(k)}^i-X_{\alpha,\beta}(\rho_{(k)}^{N,X}))+\sigma \sqrt{\Delta t} D(X_{(k)}^i-	X_{\alpha,\beta}(\rho_{(k)}^{N,X}))B_{(k)}^{X,i}\,, \quad i\in[N]\,,\label{eqX_discr}\\
		Y_{(k+1)}^i= Y_{(k)}^i -\lambda\Delta t(Y_{(k)}^i-Y_\beta^i(\rho_{(k)}^{N,Y},X_{(k)}^i))+\sigma \sqrt{\Delta t} D(Y_{(k)}^i-Y_\beta(\rho_{(k)}^{N,Y},X_{(k)}^i))B_{(k)}^{Y,i}\,,\quad  i\in[N]\label{eqY_discr}
	\end{numcases}
\end{subequations}
where $\rho_{(k)}^{N,X},\rho_{(k)}^{N,Y}$ are the empirical measures associated with the two particle populations, and $B_{(k)}^{X,i}, B_{(k)}^{Y,i}$ for all $i \in [N]$ are vectors independently sampled from the standard normal distributions $\mathcal{N}(0,I_{d_1})$ and $\mathcal{N}(0,I_{d_2})$ respectively.

\subsection{Results for nonconvex-nonconcave benchmark problems}

We consider the following challenging nonconvex-nonconcave problems which are known to violate regularity conditions such as Polyak--Łojasiewicz or Kurdyka--Łojasiewicz conditions 
\cite{zheng2023universal}.
\begin{itemize}
\item Problem A: "Bilinearly-coupled minimax" \cite{grimmer2023prox}:
\begin{align*}
\TE(x,y) &= f(x) + Axy -   f(y) \\
&\textup{with} \; f(z) =  (z+1)(z-1)(z+3)(z-3), \quad A = 10
\end{align*}
and $\mc{X} = \mc{Y} = [-4,4]$. While the unique stationary point for gradient-based dynamics is $(0,0)$, the actual problem solutions in the sense of Definition \ref{defsolminmax} are given by
\[(x^*,y_\pm^*) \approx (0,\pm 2.24)\,. \]
\item Problem B: "Forsaken" \cite{hsieh2021limits}
\begin{align*}
\TE(x,y) &= x(y-0.45) + \phi(x) - \phi(y) \\
&\textup{with} \; \phi(z) = z^2/4 - z^4/2 + z^6/6
\end{align*}
and $\mc{X} = \mc{Y} = [-1.5,1.5]$. Interestingly, while the optimal solution is considered to be the point $(x^\star,y^\star)\approx (0.08,0.4)$ \cites{hsieh2021limits,zheng2023universal}, the true minmax global solutions in the sense of Definition \ref{defsolminmax} are 
\[(x^*,y_\pm^*) \approx (0,\pm 1.31)\,. \]
\item Problem C: "Sixth-order polynomial" \cites{wang2020ridge}
\begin{align*}
\TE(x,y) &= (4x^2 - (y-3x + 0.05x^3)^2 - 0.1 y^4) \exp(-0.01(x^2 + y^2))\,
\end{align*}
with $\mathcal{X} = \mathcal{Y} = [-2,2]$ and unique solution
\[(x^*, y^*) = (0,0)\,.\]
\end{itemize}
The problems are all one-dimensional in both variables, that is, $d_1 = d_2 = 1$. To ensure the CBO dynamics remains in the prescribed search spaces, we project the particles to $\mc{X},\mc{Y}$ after each time step.

Figure \ref{fig:single_test} illustrate the algorithm dynamics for a single run. On the left pictures, we show the initialization of $N=25$ coupled particles $\{(X^i_{(0)},Y^i_{(0)})\}_{i\in{[N]}}$, and the evolution of the mean values $(m_{(k)}^x, m_{(k)}^y) = (1/N)\sum_{i\in [N]} (X_{(k)}^i, Y_{(k)}^i)$. We note how, in all the problems considered, the algorithm is able to converge towards a global minmax solution. 
On centre and on the right plots of Figure \ref{fig:single_test} we can see the particles trajectories over the spaces $\mc{X}$ and $\mc{Y}$ separately. For Problems A and B particles in the $\mc{X}$ space reaches a consensus over the solution $x^* = 0$ faster with respect to the ones in $\mc{Y}$ space, see Figures \ref{fig:singleA} and \ref{fig:singleB}. This could be justified by the fact the function $\max_y \TE(x^*,y)$ attains two maximal points. We note that, ultimately, particles concentrate around a single solution, creating full consensus around it.
For Problem $C$ we note a more oscillatory behavior around the global solution, for particles both in the $\mc{X}$ and $\mc{Y}$ space, but consensus is eventually reached if we let the particles evolve for a sufficiently large time horizon.

\begin{figure}
\centering
\subfloat[Problem A \label{fig:singleA}]{\includegraphics[width = 1\linewidth]{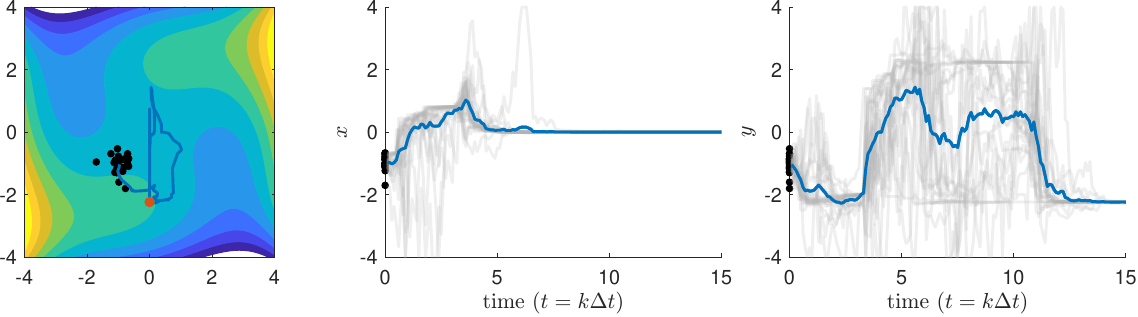}}\\ \bigskip
\subfloat[Problem B\label{fig:singleB}]{\includegraphics[width = 1\linewidth]{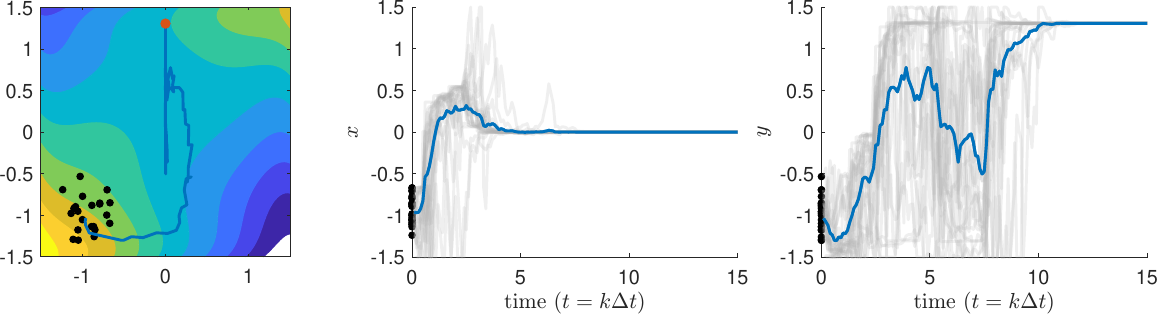}}\\ \bigskip
\subfloat[Problem C\label{fig:singleC}]{\includegraphics[trim = 1.6cm 12cm 0cm 12cm,clip,width = 1\linewidth]{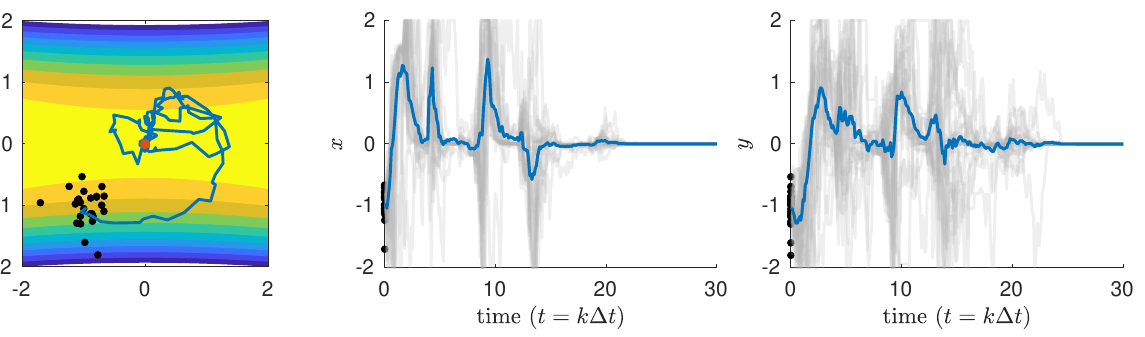}}
\caption{Illustration of a single run fo the algorithm dynamics \eqref{discrete_particle} for the three benchmark problem considered. Black dots represent the particles initialization, while the blue lines the evolution of their (unweighted) means $m^x_{(k)}, m^y_{(k)}$. On the center and on the right, we also display in grey the trajectories of the $N=25$ particles in the $\mc{X}$ and $\mc{Y}$ space, respectively. Algorithm's parameters are set to $\Delta t = 0.1, \lambda = 1, \sigma = 1.5, \alpha = \beta = 10^4$, and the time-horizon is set to $T = 15$ (Problems A and B), $T = 30$ (problem C).}
\label{fig:single_test}
\end{figure}

In the previous experiment we used the same parameters $N = 25, \Delta t = 0.1, \lambda = 1, \sigma = 1.5, \alpha = \beta = 10^4$ of CBO dynamics \eqref{discrete_particle} in all test problems. We now test the algorithm by, separately, tuning a parameter's value and verify how the performance of the algorithm changes over 100 tests. We let the particles evolve over a time horizon $T = 50$ and keep a fixes step size of $\Delta t=0.1$. We consider the computed solutions at the final step $K = T/\Delta t$ to be the best particles among the ensemble, that is, a couple $(X^{i^*},Y^{j^*})$ satisfying
\[\TE(X^{i^*},Y^{j^*}) =
\min_{i\in[N]}\max_{j \in [N]}\TE(X_{(K)}^{i},Y^{j}_{(K)}).
 \ \]
The error is computed as the squared norm from the (closest) global minimax point
\[| (X^{i^*},Y^{j^*}) - (x^*,y^*)|^2.\]
The particles positions are randomly initialized at the border of the search space $\mathcal{X} \times \mathcal{Y}$ to avoid the possibility of initializing particles close the global solutions.

We recall that the proposed convergence analysis for the particles dynamics has been carried out by relying on the mean-field approximation \eqref{MVeq} which is expected to be exact as $N \to \infty$. Therefore, the finite-particle system introduces an additional approximation error to the convergence established in Theorem \ref{theorem:main}. From Figure \ref{fig:N} we note that the error consistently decreases as the number of particles per population $N$ increases. The improvement is substantial up until approximately $N=120$ particles, and it reaches a plateau when more than $N = 10^2$ particles per population are used. It also interesting to notice the interquantile range decreases as $N$ increases. This is consistent with the mean-field limit assumption, which assume that the particles' empirical measure approaches a deterministic distribution as the number of particles $N$ approaches infinity.

Other parameters that play a central role in the convergence result Theorem \ref{theorem:main} are $\alpha>0$ and $\beta>0$ which enter the dynamics via the computation of the consensus points \eqref{YaN} and \eqref{XaN}. In particular, we recall from the quantitative Laplace principle Propositions \ref{propY} and \ref{propX} that they both need to be large, in relation to the objective function growth rates around the solutions. For Problems A and B, using larger values of $\alpha, \beta$ indeed increases the algorithm accuracy, while for Problem C, values larger than $10^5$ lead to poor results, see Figure \ref{fig:alpha}. We conjecture this may be due to the well-know trade off for CBO algorithm, between the parameter $\alpha$ (and $\beta$ in this case) and the accuracy in the mean-field approximation. Specifically, as $\alpha,\beta$
increases, the particle system is expected to diverge form the mean-field model, with possible lose of the theoretical convergence guarantees \cites{fornasier2020consensus, fornasier2021consensus1,gerber2023mean}.

Next, we show in Figure \ref{fig:sigma} how the algorithm's accuracy depends on the strength $\sigma>0$ of the stochastic component. This is a crucial parameter as low values of $\sigma$ do not provide the particles with a sufficient exploratory behavior, while large values might prevent them to create consensus. For the benchmark problem considered, best performance is reached when $\sigma \in [1,2]$. We also note in Figure \ref{fig:sigma} that large values of $\sigma$ also increases the interquantile range.

Lastly, we consider the case where the two particle populations, in the $\mc{X}$ and $\mc{Y}$ search space, evolve at different time scales. Numerically, this means using in the particles update \eqref{discrete_particle} two different time steps $\Delta t^x$ and $\Delta t^y$. We control the relative speed via an additional parameter $\epsilon >0$ such that 
\[\epsilon = \frac{\Delta t^x}{\Delta t^y}.
\] 
Figure \ref{fig:epsilon} collects the result for $\Delta t^y = \Delta t = 0.1$ fixed, and different values of $\epsilon$. In Problems A and B, the best accuracy is reached for $\epsilon \in [0.1,1]$ (that is, a slower dynamics in $\mc{X}$ space), while for Problem C the optimal performance is reached for $\epsilon = 1$ (that is, same time scale for both dynamics).

\begin{figure}
\centering
\subfloat[Varying number $N$ of particles per population \label{fig:N}]{\includegraphics[width = 0.45\linewidth]{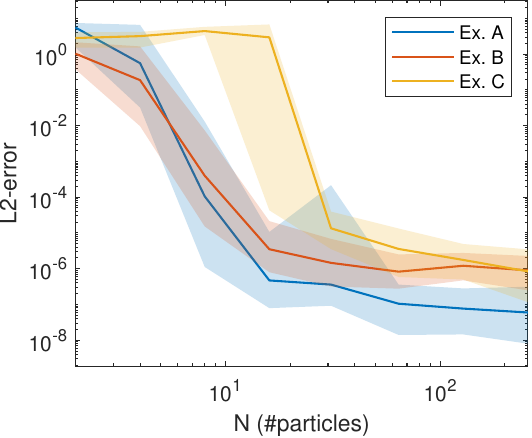}} \hfill
\subfloat[Varying Laplace parameters $\alpha, \beta$ \label{fig:alpha}]{\includegraphics[width = 0.45\linewidth]{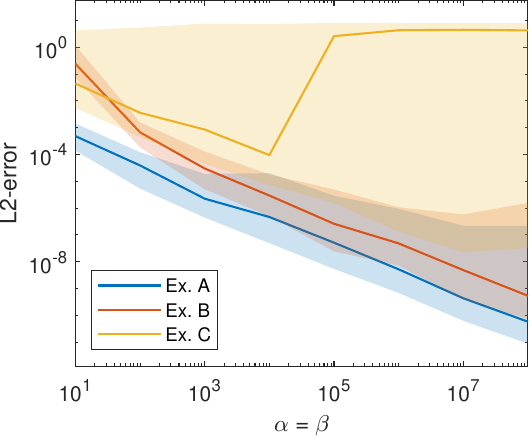}}\\
\subfloat[Varying stochastic strength $\sigma>0$ \label{fig:sigma}]{\includegraphics[width = 0.45\linewidth]{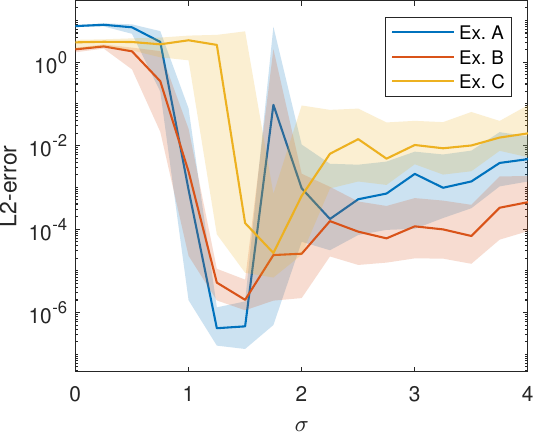}}\hfill
\subfloat[Varying time-scale $\epsilon$, $ \Delta t^x = \epsilon \Delta t^y$ with $\Delta t^y = \Delta t$ \label{fig:epsilon}]{\includegraphics[width = 0.45\linewidth]{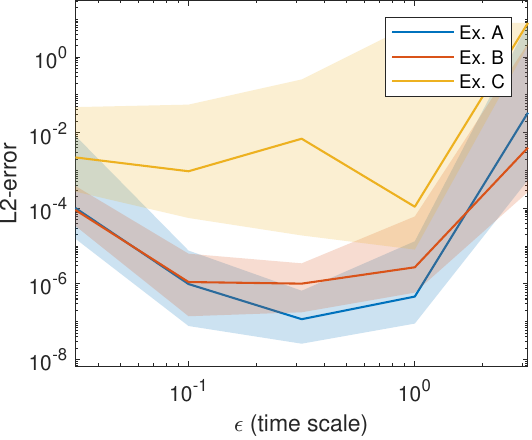}}\hfill
\caption{Accuracy reached for the three benchmark problems considered when varying a parameter of the dynamics. The reference set of parameters is given by $N  = 20, \Delta t = 0.1, \lambda = 1, \sigma = 1.5, \alpha = \beta = 10^4$. The plots illustrate the median value and the quantile range $[0.2, 0.8]$ over 100 runs.
}
\label{fig:param_test}
\end{figure}


\section{Acknowledgements}
The work of G.B. is funded by the Deutsche Forschungsgemeinschaft (DFG, German Research Foundation) through 320021702/GRK2326 ``Energy, Entropy, and Dissipative Dynamics (EDDy)''. The Department of Mathematics and Scientific Computing at the University of Graz, with which H.H. are affiliated, is a member of NAWI Graz (https://nawigraz.at/en). J.Q. is partially supported by the National Science and Engineering Research Council of Canada (NSERC) and by the 2023-2024 PIMS-Europe Fellowship.

\section*{Appendix}
\label{sec:appendix}
\begin{proof}[Proof of Proposition \ref{propositive}]
	For simplicity we drop the sup-index ${\bar y}$ for $\phi_r^{\bar y}$. It is easy to compute
	\begin{align}
		\frac{d \EE[\phi_r(\OX_t,\OY_t)]}{dt}&=\sum_{k=1}^{d}\EE[T_{1,k}^x(\OX_t,\OY_t)+T_{2,k}^x(\OX_t,\OY_t)]\nn\\
		&+\sum_{k=1}^{d}\EE[T_{1,k}^y(\OX_t,\OY_t)+T_{2,k}^y(\OX_t,\OY_t)]
	\end{align}
	with
	\begin{equation}
		T_{1,k}^x:=-\lambda(\OX_t-X_{\alpha,\beta}(\rho_t^X))_{k}\partial_{x_k}\phi_r\mbox{ or }T_{2,k}^x:=\frac{\sigma^2}{2}(\OX_t-X_{\alpha,\beta}(\rho_t^X))_{k}^2\partial_{x_kx_k}^2\phi_r
	\end{equation}
	and
	\begin{equation}
		T_{1,k}^y:=-\lambda(\OY_t-Y_\beta(\rho_t^Y,\OX_t))_{k}\partial_{y_k}\phi_r\mbox{ and }T_{2,k}^y:=\frac{\sigma^2}{2}(\OY_t-Y_\beta(\rho_t^Y,\OX_t))_{k}^2\partial_{y_ky_k}^2\phi_r\,.
	\end{equation}
	Let us define the open $\ell_\infty$-ball $\Omega_r:=\{(x,y)\in\R^{2d}:\, (x,y)\in B_r(x^*)\otimes B_r({\bar y}) \}$, and for each $k=1,\dots,d$ the subsets
	\begin{equation}
		K_{1,k}^x:=\{(x,y)\in \R^{2d}:\,|(x-x^*)_k|>\sqrt{c}r\}
	\end{equation}
	\begin{equation}
		K_{1,k}^y:=\{(x,y)\in \R^{2d}:|(y-{\bar y})_k|>\sqrt{c}r\}
	\end{equation}
	and
	\begin{align}
		K_{2,k}^x:=\bigg\{(x,y) \in \mathbb{R}^{2d}:&-\lambda(x-X_{\alpha,\beta}(\rho_t^X))_{k}(x-x^{*})_{k}(r^{2}-(x-x^{*})_{k}^{2})^{2} \\
		>&\tilde{c} r^{2} \frac{\sigma^{2}}{2}(x-X_{\alpha,\beta}(\rho_t^X))_{k}^{2}(x-x^{*})_{k}^{2}
		\bigg\}
	\end{align}
	\begin{align}
		K_{2,k}^y:=\bigg\{(x,y) \in \mathbb{R}^{2d}:
		&-\lambda(y-Y_\beta(\rho_t^Y,x))_{k}(y-{\bar y})_{k}(r^{2}-(y-{\bar y})_{k}^{2})^{2} \\
		>&\tilde{c} r^{2} \frac{\sigma^{2}}{2}(y-Y_\beta(\rho_t^Y,x))_{k}^{2}(y-{\bar y})_{k}^{2}
		\bigg\}\,,
	\end{align}
	where $\tilde c:=2c-1\in (0,1)$. For fixed $k$ we decompose $\Omega_r$ according to
	\begin{align}
		\Omega_{r}=\left((K_{1,k}^x)^{c} \cap \Omega_{r}\right) \cup\left((K_{1,k}^x)^c \cap (K_{2,k}^x)^{c} \cap \Omega_{r}\right) \cup\left(K_{1,k}^x \cap K_{2,k}^x \cap \Omega_{r}\right)
	\end{align}
	Then in the following we consider each of these subsets separately.
	
	\textbf{Subset} $(K_{1,k}^x)^c\cap \Omega_r:$ For each $(\OX_t,\OY_t)\in (K_{1,k}^x)^c$, we have $|(\OX_t-x^*)|\leq \sqrt{c}r$. To estimate $\EE[T_{1,k}^x]$, we use the expression of $\partial_{x_k}\phi_r$ and get
	\begin{align}
		\EE[T_{1,k}^x] &=2 r^{2} \lambda\EE\left[\left(\OX_t-X_{\alpha,\beta}(\rho_t^X)\right)_{k} \frac{\left(\OX_t-x^{*}\right)_{k}}{\left(r^{2}-\left(\OX_t-x^{*}\right)_{k}^{2}\right)^{2}} \right]\phi_{r} \\
		& \geq-2 r^{2} \lambda \EE\left[ \frac{\left|\left(\OX_t-X_{\alpha,\beta}(\rho_t^X)\right)_{k}\right|\left|\left(\OX_t-x^{*}\right)_{k}\right|}{\left(r^{2}-\left(\OX_t-x^{*}\right)_{k}^{2}\right)^{2}}\right] \phi_{r} \geq-\frac{2 \lambda(\sqrt{c} r+B) \sqrt{c}}{(1-c)^{2} r} \phi_{r} \\
		&=:-q_{1}^x \phi_{r}\,,
	\end{align}
	where in the last inequality we have used 
	\begin{equation}\label{esforx}
		\EE[\left|\left(\OX_t-X_{\alpha,\beta}(\rho_t^X)\right)_{k}\right|] \leq \EE[\left|\left(\OX_t-x^{*}\right)_{k}\right|]+\EE[\left|\left(x^{*}-X_{\alpha,\beta}(\rho_t^X)\right)_{k}\right|] \leq \sqrt{c} r+B\,.
	\end{equation}
	Similarly for $\EE[T_{2,k}^x]$ one obtains
	\begin{align}
		\EE[T_{2,k}^x] &=\sigma^{2} r^{2}\EE\left[\left(\OX_t-X_{\alpha,\beta}(\rho_t^X)\right)_{k}^{2} \frac{2\left(2\left(\OX_t-x^{*}\right)_{k}^{2}-r^{2}\right)\left(\OX_t-x^{*}\right)_{k}^{2}-\left(r^{2}-\left(\OX_t-x^{*}\right)_{k}^{2}\right)^{2}}{\left(r^{2}-\left(\OX_t-x^{*}\right)_{k}^{2}\right)^{4}} \right]\phi_{r} \\
		& \geq-\frac{2 \sigma^{2}\left(c r^{2}+B^{2}\right)(2 c+1)}{(1-c)^{4} r^{2}} \phi_{r}=:-q_{2}^x \phi_{r}\,.
	\end{align}

	\textbf{Subset} $K_{1,k}^x\cap (K_{2,k}^x)^c\cap \Omega_r:$  As  $(\OX_t,\OY_t)\in K_{1,k}^x$ we have $|(\OX_t-x^*)_k|>\sqrt{c}r$. We observe that $T_{1,k}^x+T_{2,k}^x\geq 0$ for all $\OX_t$ satisfying
	\begin{align}\label{positve}
		&\left(-\lambda\left(\OX_t-X_{\alpha,\beta}(\rho_t^X)\right)_{k}\left(\OX_t-x^{*}\right)_{k}+\frac{\sigma^{2}}{2}\left(\OX_t-X_{\alpha,\beta}(\rho_t^X)\right)_{k}^{2}\right)\left(r^{2}-\left(\OX_t-x^{*}\right)_{k}^{2}\right)^{2} \nn\\
		\leq &\sigma^{2}\left(\OX_t-X_{\alpha,\beta}(\rho_t^X)\right)_{k}^{2}\left(2\left(\OX_t-x^{*}\right)_{k}^{2}-r^{2}\right)\left(\OX_t-x^{*}\right)_{k}^{2}\,.
	\end{align}
	Actually, this can be verified by first showing that
	\begin{align}
		&-\lambda\left(\OX_t-X_{\alpha,\beta}(\rho_t^X)\right)_{k}\left(\OX_t-x^{*}\right)_{k}\left(r^{2}-\left(\OX_t-x^{*}\right)_{k}^{2}\right)^{2} \leq \tilde{c} r^{2} \frac{\sigma^{2}}{2}\left(\OX_t-X_{\alpha,\beta}(\rho_t^X)\right)_{k}^{2}\left(\OX_t-x^{*}\right)_{k}^{2} \\
		&\quad=(2 c-1) r^{2} \frac{\sigma^{2}}{2}\left(\OX_t-X_{\alpha,\beta}(\rho_t^X)\right)_{k}^{2}\left(\OX_t-x^{*}\right)_{k}^{2} \leq\left(2\left(\OX_t-x^{*}\right)_{k}^{2}-r^{2}\right) \frac{\sigma^{2}}{2}\left(\OX_t-X_{\alpha,\beta}(\rho_t^X)\right)_{k}^{2}\left(\OX_t-x^{*}\right)_{k}^{2}\,,
	\end{align}
	where we have used the fact that $(\OX_t,\OY_t)\in K_{1,k}^x\cap (K_{2,k}^x)^c$ and $\tilde c=2c-1$. One also notice that
	\begin{align}
		\frac{\sigma^{2}}{2} &\left(\OX_t-X_{\alpha,\beta}(\rho_t^X)\right)_{k}^{2}\left(r^{2}-\left(\OX_t-x^{*}\right)_{k}^{2}\right)^{2} \leq \frac{\sigma^{2}}{2}\left(\OX_t-X_{\alpha,\beta}(\rho_t^X)\right)_{k}^{2}(1-c)^{2} r^{4}\nn \\
		& \leq \frac{\sigma^{2}}{2}\left(\OX_t-X_{\alpha,\beta}(\rho_t^X)\right)_{k}^{2}(2 c-1) r^{2} c r^{2} \leq \frac{\sigma^{2}}{2}\left(\OX_t-X_{\alpha,\beta}(\rho_t^X)\right)_{k}^{2}\left(2\left(\OX_t-x^{*}\right)_{k}^{2}-r^{2}\right)\left(\OX_t-x^{*}\right)_{k}^{2} 
	\end{align}
	by using $(1-c)^2\leq(2c-1)c$. Hence \eqref{positve} holds and we have $T_{1,k}^x+T_{2,k}^x\geq 0$.

	\textbf{Subset} $K_{1,k}^x\cap K_{2,k}^x\cap \Omega_r:$ Notice that when $(\OX_t)_k=(X_{\alpha,\beta}(\rho_t^X))_k$, we have $T_{1,k}^x=T_{2,k}^x=0$, so in this case there is nothing to prove. If $(X_t)_k\neq(X_{\alpha,\beta}(\rho_t^X))_k$, or $\sigma^2(X_t-X_\alpha(\rho_t^X))_k^2>0$ $(\sigma>0)$, we exploit
	$(\OX_t,\OY_t)\in K_{2,k}^x$ to get
	\begin{align}
		\frac{\left(\OX_t-X_{\alpha,\beta}(\rho_t^X)\right)_{k}\left(\OX_t-x^{*}\right)_{k}}{\left(r^{2}-\left(\OX_t-x^{*}\right)_{k}^{2}\right)^{2}} & \geq \frac{-\left|\left(\OX_t-X_{\alpha,\beta}(\rho_t^X)\right)_{k}\right|\left|\left(\OX_t-x^{*}\right)_{k}\right|}{\left(r^{2}-\left(\OX_t-x^{*}\right)_{k}^{2}\right)^{2}} \\
		&>\frac{2 \lambda\left(\OX_t-X_{\alpha,\beta}(\rho_t^X)\right)_{k}\left(\OX_t-x^{*}\right)_{k}}{\tilde{c} r^{2} \sigma^{2}\left|\left(\OX_t-X_{\alpha,\beta}(\rho_t^X)\right)_{k}\right|\left|\left(\OX_t-x^{*}\right)_{k}\right|} \geq-\frac{2 \lambda}{\tilde{c} r^{2} \sigma^{2}} .
	\end{align}
	Using this, $T_{1,k}^x$ can be bounded from below
	\begin{equation}
		T_{1,k}^x=2 r^{2} \lambda\left(\OX_t-X_{\alpha,\beta}(\rho_t^X)\right)_{k} \frac{\left(\OX_t-x^{*}\right)_{k}}{\left(r^{2}-\left(\OX_t-x^{*}\right)_{k}^{2}\right)^{2}} \phi_{r} \geq-\frac{4 \lambda^{2}}{\tilde{c} \sigma^{2}} \phi_{r}=:-q_{3}^x \phi_{r}\,.
	\end{equation}
	Moreover since $(\OX_t,\OY_t)\in K_{1,k}^x$ and $2(2c-1)c\geq(1-c)^2$ implied by the assumption, one has
	\begin{equation}
		2\left(2\left(\OX_t-x^{*}\right)_{k}^{2}-r^{2}\right)\left(\OX_t-x^{*}\right)_{k}^{2} \geq\left(r^{2}-\left(\OX_t-x^{*}\right)_{k}^{2}\right)^{2}\,,
	\end{equation}
	which yields that $T_{2,k}^x\geq 0$. 
	
	\textbf{Concluding the proof:} Collecting estimates from above and similar estimates for $\EE[T_{1,k}^y+T_{2,k}^y]$ under the decomposition of $\Omega_r=\left((K_{1,k}^y)^{c} \cap \Omega_{r}\right) \cup\left((K_{1,k}^y)^c \cap (K_{2,k}^y)^{c} \cap \Omega_{r}\right) \cup\left(K_{1,k}^y \cap K_{2,k}^y \cap \Omega_{r}\right)$, we get
	\begin{align}
		&\frac{d \EE[\phi_r(\OX_t,\OY_t)]}{dt}=\sum_{k=1}^{d}\EE[T_{1,k}^x+T_{2,k}^x]+\sum_{k=1}^{d}\EE[T_{1,k}^y+T_{2,k}^y]\nn\\
		=&\sum_{z=x,y}\sum_{k=1}^{d}\bigg(\EE[(T_{1,k}^z+T_{2,k}^z)\textbf{I}_{K_{1,k}^z\cap K_{2,k}^z\cap \Omega_r}]+\EE[(T_{1,k}^z+T_{2,k}^z)\textbf{I}_{K_{1,k}^z\cap (K_{2,k}^z)^c\cap \Omega_r}]\nn\\
		&+\EE[(T_{1,k}^z+T_{2,k}^z)\textbf{I}_{(K_{1,k}^z)^c\cap \Omega_r}]\bigg)\nn\\
		\geq&-d(\max\{q_1^x+q_2^x,q_3^x\}+\max\{q_1^y+q_2^y,q_3^y\})\EE[\phi_r]=:-\vartheta\EE[\phi_r]\,.
	\end{align}
	An application of Gronwall's inequality concludes that
	\begin{equation}
		\PP((\OX_t,\OY_t)\in  B_r(x^*)\times B_r({\bar y}))\geq \EE[\phi_r(\OX_t,\OY_t)]\geq \EE[\phi_r(\OX_0,\OY_0)]\exp(-\vartheta t)\,.
	\end{equation}
	We remark here that for the estimate of $\EE[T_{1,k}^x]$ in the set of $(K_{1,k}^y)^{c}\cap \Omega_r$, one replace \eqref{esforx} by 
		\begin{equation}
		\EE[\left|\left(\OY_t-Y_\beta(\rho_t^Y,\OX_t)\right)_{k}\right|] \leq \EE[\left|\left(\OY_t-\bar y\right)_{k}\right|]+|(y^*-\bar y)_k|+\EE[\left|\left(y^{*}-Y_\beta(\rho_t^Y,\OX_t)\right)_{k}\right|] \leq \sqrt{c} r+\bar c_2+|y^*|+B\,.
	\end{equation}
\end{proof}

\bibliographystyle{amsxport}
\bibliography{minmax}


\end{document}